\newtheorem{theorem}{Theorem}[section]
\newtheorem{proposition}[theorem]{Proposition}
\newtheorem{lemma}[theorem]{Lemma}
\theoremstyle{definition}
 \newtheorem{definition}[theorem]{Definition}
\newtheorem*{definition*}{Definition}
\theoremstyle{remark}
\newtheorem{remark}[theorem]{Remark}
\numberwithin{equation}{section}
\newcommand{\al}{\alpha}
\newcommand{\la}{\lambda}
\newcommand{\vp}{\varphi}
\newcommand{\De}{\Delta}
\newcommand{\Ga}{\Gamma}
\newcommand{\Om}{\Omega}
\def\NN{\mathbb{N}}
\def\RR{\mathbb{R}}
\def\ZZ{\mathbb{Z}}
\newcommand{\cD}{{\mathcal D}}
\newcommand{\cH}{{\mathcal H}}
\newcommand{\cK}{{\mathcal K}}
\newcommand{\pd}{\partial}
\newcommand\minus\backslash
\newcommand\lan\langle
\newcommand\ran\rangle
\DeclareMathOperator\Div{div}
\DeclareMathOperator\Vol{Vol} 
\def\Harm{\cH_\Om }
\renewcommand\leq\leqslant
\renewcommand\geq\geqslant
\newlength{\intwidth}
\newcommand\BOm{\overline\Om}
 \DeclareMathOperator\curl{curl}
\newcommand\restr{\!\upharpoonright_{\pd\Om}}\newcommand\restrt{\!\upharpoonright_{\pd\Om^t}}
\newcommand\Dom{\cD_\Om}
\begin{document}

\title[Optimal convex domains for curl]{Optimal convex domains\\ for the first curl eigenvalue}

\author{Alberto Enciso}
\address{Instituto de Ciencias Matem\'aticas, Consejo Superior de
  Investigaciones Cient\'\i ficas, 28049 Madrid, Spain}
\email{aenciso@icmat.es, wadim.gerner@icmat.es, dperalta@icmat.es}

\author{Wadim Gerner}

\author{Daniel Peralta-Salas}

%
%
\begin{abstract}
We prove that there exists a bounded convex domain $\Omega\subset \mathbb{R}^3$ of fixed volume that minimizes the first positive curl eigenvalue among all other bounded convex domains of the same volume. We show that this optimal domain cannot be analytic, and that it cannot be stably convex if it is sufficiently smooth (e.g., of class $C^{1,1}$). Existence results for uniformly H\"older optimal domains in a box (that is, contained in a fixed bounded domain $D\subset \RR^3$) are also presented.
\end{abstract}
\maketitle

\section{Introduction}

Let $\Om\subset\RR^3$ be a bounded domain. The eigenfunctions of curl on~$\Om$ are divergence-free vector fields that satisfy
\begin{align}\label{P}
\curl u_k=\mu_k(\Om)\, u_k\quad\text{in }\Om\,, \qquad u_k\restr\cdot N=0
\end{align}
and are orthogonal in~$L^2$ to the space of harmonic fields $\cH_\Om$ on~$\Om$
that are tangent to the boundary (for a precise definition see Section~\ref{S.Lips}); here $N$ is a unit normal field on $\pd\Om$. The curl eigenfunctions are also known as Beltrami fields or force-free fields in different contexts, and it is well known that they play a preponderant role in the analysis of physical systems described by solenoidal vector fields, such as fluid mechanics, electromagnetism or magnetohydrodynamics.

In this paper we are mainly interested in convex domains. Without any a priori regularity assumption, it is standard that a bounded convex domain is homeomorphic to a ball and has a Lipschitz boundary, cf.~\cite[Lemma 2.3]{DL04}. Accordingly, we need to make sense of the spectral problem~\eqref{P} for Lipschitz-continuous domains.

More precisely, in
Section~\ref{S.sa} we will recall that curl defines a self-adjoint
operator with compact resolvent whose domain, which we will denote by $\Dom$, is dense in the space
\[
\cK(\Om)=\Big\{ v\in L^2(\Om): \int_\Om v\cdot w\, dx=0\;
\text{for all } w\in L^2(\Om) \text{ with } \curl w=0\Big\}\,,
\]
and $u_k$ is assumed to belong to the domain of this operator. When
the domain $\Om$ is smooth, the self-adjointness of curl was established by
Giga and Yoshida~\cite{Giga}; see also~\cite{Hiptmair} for a recent
analysis of the self-adjoint extensions of curl.

This self-adjoint operator, which we still denote by $\curl$, has infinitely many positive and negative
eigenvalues $\{\mu_k(\Om)\}_{k=-\infty}^\infty$ with finite multiplicity, which tend to
$\pm\infty$  as
$k\to\pm\infty$ and which one can label so that
\[
\cdots \leq \mu_{-3}(\Om)\leq \mu_{-2}(\Om)\leq \mu_{-1}(\Om) <0<\mu_1(\Om)\leq \mu_2(\Om)\leq \mu_3(\Om)\leq \cdots
\]
We will refer to~$\mu_1(\Om)$ and $\mu_{-1}(\Om)$ as the {\em first
  positive eigenvalue}\/ and {\em first negative eigenvalue}\/ of the curl
operator, respectively. Note that their multiplicity can be higher than~1 (as is the case when $\Om$ is a spherically symmetric domain~\cite{Canta}).

When $\Om$ is smooth, one can equivalently write
\[
\cK(\Om)=\Big\{ v\in L^2(\Om): \Div v=0\,,\; v\restr\cdot
N=0\,,\; \int_\Om
v\cdot h\, dx=0 \; \text{for all } h\in\Harm \Big\}\,,
\]
where $\Harm $ is the space of harmonic fields on~$\Om$
that are tangent to the boundary (see Section~\ref{S.Lips} for a precise definition). It is standard that $\text{dim}(\Harm)=g(\Om)$, where $g(\Om)$ is the genus of $\pd\Om$. In the case of Lipschitz domains $N\cdot v\restr$ cannot be
defined as an element of $H^{-\frac12}(\pd\Om)$ for all $v\in
L^2(\Om)$ (see e.g.~\cite{Buffa}).

The main goal of this article is to explore the existence of domains that minimize the first (positive or negative) curl eigenvalue among domains with the same volume. This optimization problem is not only interesting from a spectral theoretic view point (as a natural vectorial analogue of the corresponding classical problem for the Dirichlet Laplacian), but it is also motivated by Woltjer's variational principle~\cite{Wolt}. This principle establishes that the eigenfunctions associated with the first curl eigenvalue minimize the $L^2$ norm among all the divergence-free fields on $\Om$ with fixed helicity and tangent to the boundary (the so called, Taylor states), so they are natural candidates to be relaxed states of ideal plasmas.

Despite its importance, the existence of these optimal domains (of class $C^{2,\alpha}$) has been addressed only recently in~\cite{EP21}; see also~\cite[Chapter 2]{GernerThesis} for a study of the optimal domain problem in the context of Riemannian $3$-manifolds. It was proved that such smooth optimal domains, if they exist, cannot be homeomorphic to a ball, and in the case that they are axisymmetric, they cannot have a convex section. This is in strong contrast with the case of the Dirichlet Laplacian, where the famous Faber--Krahn inequality implies that the ball is the only optimal domain for the first eigenvalue. Contrary to the case of higher eigenvalues of the Dirichlet Laplacian~\cite{BDM,Henrot}, the existence of optimal shapes for the first positive curl eigenvalue, even in the class of quasi-open sets, is unclear.

Specifically, in this work we are interested in the existence of optimal domains for the curl operator within the class of convex sets (without any a priori regularity assumption). To this end, we introduce the following definition:

\begin{definition}
A bounded convex domain $\Om\subset\RR^3$ is {\em optimal}\/ for the first
positive
curl eigenvalue if
\[
\mu_1(\Om)\leq \mu_1(\Om')
\]
for any convex domain~$\Om'$ of the same volume.
\end{definition}

As we are considering the curl operator on the whole space~$\RR^3$, the scaling properties of the eigenvalue equation ensure that the concrete volume of the domain~$\Om$ is irrelevant. That is, $\Om$ is an optimal domain of volume~$V$ if and only if the rescaled domain $\la\Om$ is optimal with volume~$\la^3V$. This will not be the case when we consider optimal domains in a box later on. Also, as
the curl eigenvalues of the reflected domain $-\Om:=\{-x:x\in \Om\}$ satisfy the identity $\mu_k(-\Om)=-\mu_{-k}(\Om)$, one should note that the results that we shall prove about optimal domains for the first positive curl eigenvalue trivially extend to the case of the first negative curl eigenvalue.

Our main theorem shows that there exist optimal convex domains, and that they are not analytic. This is consistent with the results in~\cite{EP21} in the sense that the optimal domains do not need to be smooth, and moreover, they do not need to be stable in the sense that convexity may be lost under arbitrarily small volume preserving deformations. This is made precise in Proposition~\ref{MT3}, cf.~Section~\ref{analytic}, showing that if the optimal convex domain is regular enough it cannot be stably convex.

\begin{theorem}\label{T1}
There exists a bounded convex domain $\Omega$ of any fixed volume which is optimal for the first positive curl eigenvalue. Optimal convex domains are not analytic and, if they are~$C^{1,1}$, they are not stably convex.
\end{theorem}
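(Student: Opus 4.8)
The plan is to treat the three assertions in turn, the existence being the analytical core and the two rigidity statements following from a first-order (overdetermined) condition combined with the obstruction of~\cite{EP21}. For existence I would argue by the direct method, using the Woltjer variational principle to write
\[
\frac{1}{\mu_1(\Om)}=\max\Big\{\ssH(v):\ v\in\cK(\Om),\ \|v\|_{L^2(\Om)}\le 1\Big\},
\qquad
\ssH(v):=\int_{\RR^3}v\cdot\BS[v]\,dx,
\]
where each $v\in\cK(\Om)$ is extended by zero to $\RR^3$ and $\BS$ is the global Biot--Savart operator. The decisive point is that, because a convex domain has genus $0$ and a field in $\cK(\Om)$ extends by zero to a globally divergence-free $L^2$ field, the functional $\ssH$ is defined independently of $\Om$ and, $\BS$ being compact on $L^2(\RR^3)$, is \emph{weakly sequentially continuous}; only the constraint set $\cK(\Om)$ carries the domain dependence. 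Take a minimizing sequence of convex domains $\Om_n$ of volume $V$ with $\mu_1(\Om_n)\to m:=\inf\mu_1$ and maximizers $v_n$ normalized by $\|v_n\|_{L^2}=1$, so that $\ssH(v_n)=1/\mu_1(\Om_n)\to 1/m>0$.

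\emph{Compactness and lower semicontinuity.} First I would establish the auxiliary estimate $\mu_1(\Om)\ge c/\rho(\Om)$, where $\rho(\Om)$ is the inradius, by bounding the operator norm of $\BS$ on fields supported in a convex set of small inradius (a thin convex body carries little helicity); I expect this uniform estimate to be the main obstacle in the existence proof. Granting it, a minimizing sequence cannot degenerate, for at fixed volume $\diam(\Om_n)\to\infty$ would force $\rho(\Om_n)\to0$ (a convex body containing a ball of radius $\rho$ and a point at distance $\sim\diam$ has volume $\gtrsim\rho^2\diam$) and hence $\mu_1(\Om_n)\to\infty$. Thus $\diam(\Om_n)$ is bounded, and after a translation the Blaschke selection theorem yields a subsequence converging in Hausdorff distance to a compact convex set $\Om_*$; continuity of volume under Hausdorff convergence gives $\Vol(\Om_*)=V>0$, so $\Om:=\Int\Om_*$ is a bounded convex domain of volume $V$. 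Passing to a weak $L^2(\RR^3)$ limit $v_n\rightharpoonup v_*$ and using $\Om_n\subset\Om_*+\varepsilon B$, the field $v_*$ is supported in $\overline{\Om_*}$, is globally divergence-free, hence restricts to an element of $\cK(\Om)$; weak continuity of $\ssH$ gives $\ssH(v_*)=1/m>0$ (so $v_*\neq0$) and $\|v_*\|_{L^2}\le1$. Therefore
\[
\frac{1}{\mu_1(\Om)}\ge\frac{\ssH(v_*)}{\|v_*\|_{L^2}^2}\ge\ssH(v_*)=\frac1m,
\]
so $\mu_1(\Om)\le m$; as $\Om$ is itself a competitor, $\mu_1(\Om)=m$ and $\Om$ is optimal.

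\emph{Non-analyticity.} Suppose an optimal convex $\Om$ is analytic. Then $\pd\Om$ is analytic and, being convex and closed, is strictly convex on an open dense subset $U\subset\pd\Om$ (the Gauss curvature is a nonnegative analytic function that cannot vanish identically). At a strictly convex point both inward and outward normal deformations preserve convexity for small times, so every $V\in C^\infty_c(U)$ with $\int V\,dS=0$ yields \emph{two-sided} admissible convex competitors. Since the first variation of $\mu_1$ has the Hadamard form $\dot\mu_1=\mathrm{const}\cdot\int_{\pd\Om}|u|^2\,V\,dS$ (with the appropriate reading when $\mu_1$ is multiple, as in~\cite{EP21}), optimality forces the overdetermined condition $|u|^2=\mathrm{const}$ on $U$; as curl eigenfields are real-analytic up to an analytic boundary, this propagates to all of $\pd\Om$. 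But~\cite{EP21} shows that a domain homeomorphic to a ball cannot satisfy this overdetermined curl problem, and convex domains are homeomorphic to a ball, a contradiction.

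\emph{Failure of stable convexity.} If the optimal convex $\Om$ were $C^{1,1}$ and stably convex, then by definition all sufficiently small volume-preserving deformations keep it convex, so $\Om$ would be a local minimizer of $\mu_1$ among \emph{all} volume-preserving deformations; the same first-order analysis then gives $|u|^2=\mathrm{const}$ on the whole of $\pd\Om$ and contradicts~\cite{EP21} exactly as above. This is the content of Proposition~\ref{MT3}. The chief difficulties in the two rigidity statements are the rigorous shape-differentiation of a possibly multiple eigenvalue and the boundary regularity of the eigenfields, both supplied by the machinery of~\cite{EP21}.
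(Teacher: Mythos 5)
Your architecture matches the paper's almost exactly: a degeneration estimate plus compactness for existence, and a first-variation boundary identity combined with the ball-topology obstruction for the two rigidity statements. The one genuinely different packaging is your existence step: you run the direct method on the helicity functional with weakly convergent maximizers extended by zero, whereas the paper first proves Hausdorff-continuity of $\mu_1$ on convex bodies (Proposition~\ref{C.c}, via the monotonicity and scaling lemmas and the sandwich $t_n\Omega_n\subseteq\Omega\subseteq s_n\Omega_n$) and then applies Blaschke selection. Your route is viable and avoids the continuity proposition, but it buys nothing on the hard step, which you correctly identify and then leave open: the lower bound for $\mu_1$ on degenerate convex bodies. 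As stated, $\mu_1(\Omega)\geq c/\rho(\Omega)$ does not follow from the Biot--Savart operator-norm argument you sketch. That argument bounds $\mu_1(\Omega)\geq 4\pi/M$ with $M=\sup_x\int_\Omega|x-y|^{-2}dy$, and for a convex body of width $\sim h$ and diameter $\sim d$ one has $M\sim h\ln(d/h)$, not $\sim h$; the logarithm is unavoidable by this method. This is precisely where the paper works: Lemma~\ref{L.trap} traps the domains in cylinders with the quantitative relation $h_n^2 d_n\leq c$ (via a pyramid-volume argument that is more delicate than the single cone estimate you invoke, because the ``thin'' direction must be located relative to the diameter), and Lemma~\ref{L5} then computes $M$ explicitly on the cylinder and shows $M\to0$. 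Your inradius bound $\rho\lesssim\sqrt{V/d}$ would in fact suffice to kill the logarithm, but the estimate on $M$ for a general thin convex body, and the reduction to a cylinder, still have to be proved.

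The second gap is the first-variation identity, which you assert in Hadamard form ``with the appropriate reading when $\mu_1$ is multiple.'' This is the substantive content of the paper's Lemma~\ref{L.norm}, and it is not a routine citation: the eigenvalue need not be differentiable at a multiple eigenvalue, the domain is a priori only convex (so one must first know $|u_1|^2\restr\in W^{\frac13,\frac32}(\pd\Om)$, Lemma~\ref{L.rc}, for the boundary integral to make sense), and the paper sidesteps differentiating $\mu_1$ altogether by transporting $u_1$ to a competitor field $u_1^t\in\cK(\Om^t)$ with conserved helicity, so that $f(t)=|\Om^t|^{1/3}\|u_1^t\|^2/\cH(u_1^t)$ dominates $|\Om^t|^{1/3}\mu_1(\Om^t)$ and still has a critical point at $t=0$. (The volume constraint is removed by Lemma~\ref{L.Equiv} rather than by restricting to mean-zero normal speeds, but that is cosmetic.) Your concluding contradictions via~\cite{EP21} are essentially the paper's Poincar\'e--Hopf argument, with one useful refinement you should keep in mind: the paper identifies the constant as $\|u_1\|_{L^2}^2/(3|\Om|)$, so that Poincar\'e--Hopf forces this constant, and hence $u_1$ itself, to vanish --- no unique continuation from the boundary is needed. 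For the $C^{1,1}$ statement the paper also needs the $W^{2,p}$, $p>3$, regularity theory of~\cite{MP14} to make $u_1\restr$ continuous before applying the topological argument; this is glossed over in your sketch.
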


\begin{remark}
An analogous existence result holds for the class of axially symmetric convex domains.
\end{remark}

The proof of Theorem~\ref{T1} is presented in Sections~\ref{proof} and~\ref{analytic}. Although we follow the same strategy as in the case of the Dirichlet Laplacian, several nontrivial technical difficulties  arise when trying to adapt the argument to the curl operator. First, using certain monotonicity and scaling properties we show the continuity of the curl eigenvalues with respect to the Hausdorff distance between compact sets. The most challenging part of the proof is then to establish that for any sequence of bounded convex domains of fixed volume, the first curl eigenvalue grows unless the diameters of the sets of the sequence are uniformly bounded. This requires a new estimate for the first curl eigenvalue in cylindrical domains. Finally, to prove that the domain cannot be analytic, we make use of an integral identity on the boundary of an optimal domain that involves the pointwise norm $|u_1|\restr$. The existence of optimal domains for the class of convex and axially symmetric sets is established in Section~\ref{SS.axi}.

Concerning the regularity problem, we do not know if an optimal convex domain is necessarily $C^1$, as is the case for the Dirichlet Laplacian~\cite{Bucur}. The proof in~\cite{Bucur} exploits the notion of $\Gamma$-convergence, which is well adapted to the Laplacian. Although it is possible to define a suitable notion of $\Gamma$-convergence for the curl operator, which in fact enjoys nice properties such as Lipschitz continuity of the curl eigenvalues, the vectorial nature and the non-scalar boundary conditions of problem~\eqref{P} makes it hard to exploit the aforementioned $\Gamma$-convergence to study the regularity of the optimal domains. See Appendix~\ref{app} for details.

The second main result that we prove concerns the existence of optimal domains in a box for the class of uniformly H\"older sets. This result is easier than Theorem~\ref{T1} as it concerns sequences of domains that are uniformly bounded, as they are assumed to be contained in a fixed smooth bounded domain $D\subset\RR^3$. In the statement we use the family $\mathcal{D}^{k,\alpha}(L,c_0,c_1)$ formed by domains $\Om\subset D$ of class $C^{k,\alpha}$ with uniform H\"older and gradient bounds given by $(L,c_0,c_1)$. From now on we assume that the parameters $(L,c_0,c_1)$ are {\em admissible}\/ in the sense that there are domains of volume~$V$ in $\mathcal{D}^{k,\alpha}(L,c_0,c_1)$. This is not problematic, since any $C^{k,\alpha}$ domain $\Om\subset D$ obviously belongs to $\mathcal{D}^{k,\alpha}(L,c_0,c_1)$ if $L,c_0^{-1},c_1^{-1}$ are large enough. Precise definitions are given in Section~\ref{SS.Holder}.

\begin{theorem} \label{T.main2}
For any $V\in(0,|D|)$,  $k\geq2$,  $\al\in(0,1]$ and any admissible constants $L,c_0,c_1$, there exists a domain $\Omega\subset D$ of volume~$V$ that is optimal for the first positive curl eigenvalue within the class $\mathcal{D}^{k,\alpha}(L,c_0,c_1)$.
\end{theorem}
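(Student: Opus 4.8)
The plan is to apply the direct method of the calculus of variations, exploiting that all competitors live in the fixed box $D$ so that no mass can escape to infinity and no diameters can blow up (this is exactly what makes the result easier than Theorem~\ref{T1}). Set
\[
m:=\inf\{\mu_1(\Om):\Om\in\mathcal{D}^{k,\al}(L,c_0,c_1),\ |\Om|=V\},
\]
which is finite since admissibility makes the competitor set nonempty, and nonnegative since $\mu_1>0$ always. Let $\Om_n$ be a minimizing sequence, $\mu_1(\Om_n)\to m$. The first step is compactness: because each $\pd\Om_n$ is a $C^{k,\al}$ hypersurface inside $D$ with the uniform Hölder and gradient bounds encoded by $(L,c_0,c_1)$, an Arzel\`a--Ascoli argument applied to the local graph representations produces a subsequence (not relabelled) and a limit domain $\Om_*$ with $\pd\Om_n\to\pd\Om_*$ in $C^{k,\be}$ for every $\be<\al$. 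Lower semicontinuity of the $C^{k,\al}$ seminorm and of the graph/gradient bounds under $C^{k,\be}$ convergence gives $\Om_*\in\mathcal{D}^{k,\al}(L,c_0,c_1)$, and continuity of the volume gives $|\Om_*|=V$. It then remains to prove $\mu_1(\Om_n)\to\mu_1(\Om_*)$, which forces $\mu_1(\Om_*)=m$ and hence optimality of $\Om_*$.

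Second, I would reduce the eigenvalue continuity to a perturbation problem on a fixed domain. Since $\pd\Om_n\to\pd\Om_*$ in at least $C^2$ (here $k\geq2$), for all large $n$ the boundary $\pd\Om_n$ is a normal graph over $\pd\Om_*$; in particular the genus is eventually constant, so $\dim\Harm$ is fixed along the tail and all these domains are mutually diffeomorphic. I would construct diffeomorphisms $\Phi_n\colon\Om_*\to\Om_n$ with $\Phi_n\to\mathrm{id}$ in $C^{k,\be}$ and transport the spectral problem to $\Om_*$. The essential point is that a divergence-free field tangent to $\pd\Om_n$ must be transported by the Piola transform (equivalently, pulled back as a $2$-form), so that the constraint space $\cK(\Om_n)$ corresponds to a \emph{fixed} space on $\Om_*$ endowed with the variable metric $g_n$ induced by $\Phi_n$, with $g_n$ converging to the Euclidean metric in $C^{k-1,\be}$. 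Under this identification the curl eigenvalue problem on $\Om_n$ becomes a curl-type eigenvalue problem on $\Om_*$ whose coefficients converge.

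Third, I would obtain spectral convergence through the inverse-curl (helicity) operator. As recalled in the excerpt, curl is self-adjoint with compact resolvent on $\cK(\Om)$, so its inverse $K_\Om:=\curl^{-1}$ is a compact self-adjoint operator with eigenvalues $\{\mu_k(\Om)^{-1}\}$; in particular $\mu_1(\Om)^{-1}$ is its largest eigenvalue, and $K_\Om$ is realized concretely by the Biot--Savart operator $\BS$ followed by the orthogonal projection onto $\cK(\Om)$ (this is also the content of Woltjer's variational principle). After the pullback of the previous paragraph these become operators $\widetilde K_n$ acting on a single Hilbert space, whose defining data---the metric $g_n$, the Biot--Savart kernel, and the orthogonal projections onto the harmonic fields and onto the curl-free fields---all converge because $\Phi_n\to\mathrm{id}$ in $C^{k,\be}$. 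I would show $\widetilde K_n\to\widetilde K_*$ in operator norm; since the largest eigenvalue of a compact self-adjoint operator satisfies $|\lambda_{\max}(A)-\lambda_{\max}(B)|\leq\|A-B\|$, this yields $\mu_1(\Om_n)^{-1}\to\mu_1(\Om_*)^{-1}$ and completes the argument.

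The main obstacle is precisely this operator-norm convergence $\widetilde K_n\to\widetilde K_*$, i.e.\ forcing the vectorial, nonlocal curl problem to behave continuously under $C^{k,\be}$ boundary perturbations. In contrast to the Dirichlet Laplacian one must control simultaneously (i) the Piola-transported divergence and tangency constraints, (ii) the convergence of the finite-dimensional harmonic-field projections, and (iii) the convergence of the Biot--Savart integral operator, whose kernel is weakly singular, over a moving domain. I expect (iii)---uniform control of this singular integral along the diffeomorphisms $\Phi_n$---to be the most delicate point, and the uniform $C^{k,\al}$ bounds built into $\mathcal{D}^{k,\al}(L,c_0,c_1)$ are exactly what render these estimates uniform in $n$ and the limiting operator well defined.
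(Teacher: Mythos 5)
Your overall skeleton --- direct method, compactness of the class $\mathcal{D}^{k,\al}(L,c_0,c_1)$ via Arzel\`a--Ascoli, continuity of the volume, and then continuity of $\mu_1$ along the minimizing sequence --- is exactly the structure of the paper's proof, and your compactness step is essentially Lemma~\ref{L.HC} (the paper works with the global defining functions $f_n\in C^{k,\alpha}(L,c_0,c_1)$ rather than local graph representations, and uses Thom's isotopy theorem to produce diffeomorphisms $\Phi_n$ that are $C^{k-1}$-close to the identity with $\Om=\Phi_n(\Om_n)$, but the idea is the same).

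The gap is in your treatment of the eigenvalue continuity. You reduce everything to operator-norm convergence of the Piola-transported inverse-curl operators $\widetilde K_n\to\widetilde K_*$, and then explicitly label that convergence ``the main obstacle'' and ``the most delicate point'' without proving it. That step carries all of the analytic content of your argument: one would have to control, uniformly along the moving domains, the Leray/Hodge projections onto $\cK(\Om_n)$, the finite-dimensional harmonic-field projections, and the weakly singular Biot--Savart kernel under the pullback. The paper itself warns (in the discussion surrounding the $\Gamma$-distance of Appendix~\ref{app}) that the vectorial nature and the non-scalar boundary conditions make precisely this kind of operator-convergence argument hard to exploit, and it deliberately avoids it. Instead, the paper's continuity proof is elementary: from the diffeomorphisms $\Phi_n\to\mathrm{id}$ one extracts sequences $t_n,s_n\to1$ with $t_n\Om_n\subseteq\Om\subseteq s_n\Om_n$, and then the scaling law $\mu_1(\la\Om)=\mu_1(\Om)/\la$ (Lemma~\ref{L.Scaling}) together with the monotonicity principle $\Om_1\subseteq\Om_2\Rightarrow\mu_1(\Om_1)\geq\mu_1(\Om_2)$ (Lemma~\ref{L.MP}, proved by extending fields by zero inside the helicity variational characterization) sandwiches $\mu_1(\Om)$ between $\mu_1(\Om_n)/s_n$ and $\mu_1(\Om_n)/t_n$. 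This bypasses all three of the difficulties (i)--(iii) you list. A secondary point if you do pursue your route: the eigenvalue of $\curl^{-1}$ of largest modulus is $\max\{\mu_1^{-1},|\mu_{-1}|^{-1}\}$, so you must track the largest \emph{positive} eigenvalue to recover $\mu_1^{-1}$; the perturbation inequality you invoke still applies to it via the Rayleigh quotient, but the statement needs to be made for that quantity rather than for $\lambda_{\max}$ in modulus.
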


\section{Summary of the spectral theory of curl on Lipschitz and convex domains}
\label{S.sa}

In this section, following previous literature, we introduce the suitable functional-analytic setting for the self-adjointness of the curl operator on bounded Lipschitz domains, and establish some regularity properties of its eigenfunctions (Section~\ref{S.Lips}). This general result for Lipschitz domains (which is of interest in itself) is applied to the case of convex domains in Section~\ref{S.convpre}, where we also show the stability under close to the identity deformations of strongly convex domains.

First, we recall that a domain $\Omega\subset\RR^3$ is \textit{convex} if for any $x,y\in \overline\Omega$ we have $\lambda x+(1-\lambda)y\in \overline\Omega$ for all $0\leq \lambda\leq 1$. If $\Omega$ is a bounded domain with $C^1$ boundary we say that $\Omega$ is \textit{strongly convex} if
\[
\Om=\{x\in\mathbb R^3: f(x)< 0\}\,, \qquad \RR^3\backslash\overline{\Om}=\{x\in\mathbb R^3: f(x)> 0\}
\]
for some strongly convex function $f\in C^1(\RR^3)$ with $\nabla f(x)\neq 0$ for all $x\in\pd\Om$. $f$ is called a \textit{defining function}~\cite{Krantz} for $\Om$. We recall that $f$ is strongly convex if
\begin{equation}
(\nabla f(x)-\nabla f(y))\cdot (x-y)\geq \sigma|x-y|^2
\end{equation}
for some constant $\sigma>0$ and all $x,y\in \RR^3$; see~\cite{Vial} for different equivalent definitions of strong convexity. In particular, in the case that $\Om$ is $C^2$, it is well known that it is strongly convex if and only if its Gauss curvature is positive on $\partial\Omega$. Finally, we say that a bounded convex domain $\Om$ is \emph{stable} if for any smooth, compactly supported vector field $v$ on $\mathbb{R}^3$, the flow $\Phi^t$ of $v$ preserves convexity of $\Omega$ for small enough times, i.e., $\Phi^{t}(\Omega)$ is convex for all $|t|\ll 1$.

\subsection{The spectral problem on Lipschitz domains}\label{S.Lips}

Since a general convex domain~$\Om$ only has a Lipschitz continuous
boundary, we need to introduce some notation to make sense of the spectral problem~\eqref{P}. The results of this section work for any bounded Lipschitz domain, so they are of independent interest.

The boundary trace $v\restr$ of a function or vector field on $\Om$ is assumed to be taken as the nontangential
limit at almost every boundary point (with respect to the surface measure on $\pd\Om$), whenever this exists. More precisely, it is defined as
\[
v\restr(x):=\lim_{y\to x,\, y\in\Ga(x)} v(y)
\]
for a.e. $x\in\pd\Om$, where $\Ga(x)$ denotes the interior of a regular family of circular
truncated cones $\{\Ga(x): x\in\pd\Om\}$ with vertex at $x$, as defined e.g.\ in~\cite{Verchota}.


The space of harmonic fields on $\Om$ is then defined as
\[
\Harm :=\Big\{ h\in C^\infty(\Om): \Div h=0\,,\; \curl h=0\,,\;
h\restr\cdot N=0\Big\}\,,
\]
which is a vector space whose dimension is equal to the genus of $\pd\Om$, see~\cite[Theorem~11.1]{MMT} for details (in particular for the proof that $h\restr \in L^2(\pd\Om)$, and hence the quantity $h\restr\cdot N$ is well defined). When the domain~$\Om$ is of class~$C^1$, one can understand $h\restr\cdot N$ in the
sense of ordinary traces.

The following theorem shows the self-adjointness of the curl operator for bounded domains. In the case that $\pd\Om$ is $C^2$, see e.g.~\cite{Giga}. The general case of Lipschitz domains follows from~\cite[Section~2]{Picard} (see also~\cite{Filo,Hiptmair}):

\begin{theorem}\label{T.sa}
The curl operator with domain
\[
\Dom:=\big\{ v\in \cK(\Om): \curl v\in \cK(\Om)
\big\}
\]
defines a self-adjoint operator on $\cK(\Om)$ with compact inverse.
\end{theorem}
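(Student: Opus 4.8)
The plan is to verify the abstract criteria that make $\curl$ self-adjoint with compact inverse on the Hilbert space $\cK(\Om)$. First I would observe that $\cK(\Om)$ is by construction the orthogonal complement in $L^2(\Om)$ of the closed subspace of curl-free fields, so it is itself a closed subspace and hence a Hilbert space. The operator $\curl$ with domain $\Dom=\{v\in\cK(\Om):\curl v\in\cK(\Om)\}$ is densely defined: density of $\Dom$ in $\cK(\Om)$ follows from the fact that smooth compactly supported divergence-free fields lie in the domain and are dense in $\cK(\Om)$, a point the paper already flags when it says $\Dom$ is dense in $\cK(\Om)$.

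The core of the argument is symmetry together with a range/closedness statement. For symmetry I would integrate by parts: for $v,w\in\Dom$ one has $\int_\Om \curl v\cdot w\,dx-\int_\Om v\cdot\curl w\,dx=\int_{\pd\Om}(N\times v)\cdot w\,d\sigma$, and the boundary term must be shown to vanish. On a Lipschitz domain this requires care, since the naive trace is not available for all $L^2$ fields; the right framework is the one developed in Picard's work, where the relevant trace pairing is controlled for fields whose curl is again in $L^2$. This is precisely where the hypothesis $\curl v\in\cK(\Om)$ does its work, and it is the step I expect to be the main obstacle: one must make rigorous sense of the tangential trace $N\times v\restr$ and show the boundary pairing vanishes for the self-adjoint boundary condition encoded in $\cK(\Om)$, rather than merely for the symmetric one. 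Concretely I would invoke the trace theory and the integration-by-parts formula from~\cite[Section~2]{Picard} (see also~\cite{Filo,Hiptmair}) to establish both symmetry and that the adjoint domain coincides with $\Dom$.

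Having symmetry, self-adjointness reduces to showing that $\curl\pm\iu$ (or more simply $\curl$ itself, exploiting that $0$ is not an eigenvalue on $\cK(\Om)$ once the harmonic fields have been quotiented out) is surjective onto $\cK(\Om)$ with bounded inverse. Here I would set up the variational/weak problem: given $f\in\cK(\Om)$, solve $\curl v=f$ with $v\in\cK(\Om)$, using that on $\cK(\Om)$ the curl-free kernel has been removed so the solution is unique. The solution operator $\curl^{-1}\colon\cK(\Om)\to\cK(\Om)$ is then bounded, and to upgrade boundedness to compactness I would appeal to the compact embedding of the relevant space of $L^2$ fields with $L^2$ curl and vanishing tangential (or normal) boundary data into $L^2(\Om)$; this Weck/Picard-type compactness result holds on bounded Lipschitz domains and is exactly what yields compact resolvent. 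Finally, symmetry plus bijectivity of $\curl$ on $\cK(\Om)$ gives self-adjointness by the standard criterion, and compactness of the inverse completes the proof.
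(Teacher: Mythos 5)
The paper does not actually prove Theorem~\ref{T.sa}: it is quoted from the literature, with the smooth case attributed to Yoshida--Giga and the Lipschitz case to~\cite[Section~2]{Picard} (see also~\cite{Filo,Hiptmair}). Your plan is a reasonable reconstruction of that standard argument --- symmetry plus bijectivity of $\curl$ on $\cK(\Om)$, then compactness from a Weck--Picard type embedding --- and you rightly send the Lipschitz trace technicalities back to the same sources, so there is no competing proof in the paper to diverge from.

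Two substantive warnings about the sketch as written, both topological rather than trace-theoretic. First, in the symmetry step the boundary term is $\int_{\pd\Om}(v\times w)\cdot N\,dS$, and it does \emph{not} vanish pointwise for tangent fields: if $v$ and $w$ are both tangent to $\pd\Om$ at a point, then $v\times w$ is parallel to $N$ there, so the integrand is generically nonzero. The vanishing of the integral is a global fact: since $\curl v$ and $\curl w$ are tangent to $\pd\Om$, the pullbacks of the $1$-forms $v^\flat$ and $w^\flat$ to $\pd\Om$ are closed, the boundary term is the cup-product pairing of their classes in $H^1(\pd\Om;\RR)$, and the requirement that $v,w$ and $\curl v,\curl w$ be orthogonal to \emph{all} curl-free fields (in particular to $\Harm$) forces these classes into a common Lagrangian subspace on which the pairing vanishes. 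On a multiply connected domain, tangency alone leaves a whole family of self-adjoint extensions parametrized by such Lagrangian subspaces --- this is the main point of~\cite{Hiptmair} --- so no amount of trace regularity will make the boundary term vanish without using the orthogonality to $\Harm$; attributing the cancellation to the hypothesis $\curl v\in\cK(\Om)$ alone, as you do, misses half of the mechanism. Second, your density argument fails for the same reason: a smooth compactly supported divergence-free field need not lie in $\cK(\Om)$ when $\Om$ is not simply connected, because it can pair nontrivially with a harmonic field; such fields are dense only in $\{v\in L^2(\Om):\Div v=0,\ N\cdot v\restr=0\}$, and you must compose with the orthogonal projection onto $\cK(\Om)$ (which does preserve membership in $\Dom$, since the curl of a compactly supported field is automatically orthogonal to every curl-free field). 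Both issues are invisible for convex domains but matter at the stated level of generality of the theorem.
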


The following result is also standard, but we provide a proof for the sake of completeness. It establishes the regularity of the eigenfields of curl in $\Dom$, as well as the regularity of their nontangential traces.

\begin{proposition}\label{P.bc}
If $v\in\Dom\cup \Harm$, then $v\in H^{1/2}(\Om)$, $\Div v=0$ in the sense of distributions, $N\cdot
v\restr=0$ and $N\times v\restr\in L^2(\pd\Om)$. Furthermore, if
$v\in\Dom\cup\Harm$ satisfies the equation $\curl v=\mu v$ for some real constant~$\mu$,
then $v\in C^\infty(\Om)$ (in fact, $v$ is analytic).
\end{proposition}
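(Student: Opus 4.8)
The plan is to handle the two cases $v\in\Dom$ and $v\in\Harm$ in parallel, after first recording what they have in common: in either case $v\in L^2(\Om)$ and $\curl v\in L^2(\Om)$. For $v\in\Dom$ this holds because $\curl v\in\cK(\Om)\subset L^2(\Om)$ by the definition of the domain in Theorem~\ref{T.sa}, and for $h\in\Harm$ it is trivial since $\curl h=0$. Thus every field under consideration solves a div--curl system with $L^2$ data, which is the setting in which the Lipschitz regularity theory applies.

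First I would establish that $\Div v=0$ in the sense of distributions and that the normal trace vanishes weakly. For $v\in\Dom$ the key is the orthogonality encoded in $\cK(\Om)$: for every $\phi\in H^1(\Om)$ the field $\nabla\phi$ is curl-free and lies in $L^2(\Om)$, so the defining property of $\cK(\Om)$ gives $\int_\Om v\cdot\nabla\phi\,dx=0$. Testing against $\phi\in C_c^\infty(\Om)$ yields $\Div v=0$ distributionally; for $h\in\Harm$ this, together with $N\cdot h\restr=0$, is part of the definition. Since then $v\in H(\Div,\Om)$, its normal trace is a well-defined element of $H^{-1/2}(\pd\Om)$ and Green's formula $\int_\Om v\cdot\nabla\phi\,dx=\langle N\cdot v,\phi\restr\rangle$ holds. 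Combining this with $\int_\Om v\cdot\nabla\phi\,dx=0$ for all $\phi\in H^1(\Om)$, and using surjectivity of the trace map $H^1(\Om)\to H^{1/2}(\pd\Om)$, forces $N\cdot v=0$ in $H^{-1/2}(\pd\Om)$.

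The heart of the argument, and the main obstacle, is the gain of $H^{1/2}$-regularity. Here I would invoke the regularity theory for the div--curl system on bounded Lipschitz domains from~\cite{MMT}: a field $v\in L^2(\Om)$ with $\Div v\in L^2(\Om)$, $\curl v\in L^2(\Om)$ and vanishing normal component $N\cdot v=0$ belongs to $H^{1/2}(\Om)$. This is precisely the delicate input, since on a merely Lipschitz boundary one cannot expect full $H^1$ regularity, the optimal gain is $H^{1/2}$, and the nontangential maximal-function estimates and $L^2$ mapping properties of the layer potentials underpinning it are nontrivial. Once $v\in H^{1/2}(\Om)$ is available, the nontangential trace $v\restr$ exists almost everywhere and lies in $L^2(\pd\Om)$, the weak identity upgrades to the pointwise statement $N\cdot v\restr=0$, and the companion trace theorem for fields with $L^2$ curl gives $N\times v\restr\in L^2(\pd\Om)$.

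Finally, for the extra regularity of eigenfields, suppose $\curl v=\mu v$. Applying curl once more and using the identity $\curl\curl v=\nabla(\Div v)-\Delta v$ together with $\Div v=0$ gives $-\Delta v=\mu^2 v$ in $\Om$, a constant-coefficient Helmholtz equation satisfied componentwise. Interior elliptic regularity then yields $v\in C^\infty(\Om)$, and since $\Delta+\mu^2$ is elliptic with analytic coefficients, analytic hypoellipticity shows that $v$ is real-analytic in $\Om$. I expect every step except the third to be routine integration by parts and classical interior elliptic theory; the genuine difficulty lies entirely in the $H^{1/2}$ estimate and the passage from the weak normal trace to the nontangential $L^2$ traces on the Lipschitz boundary.
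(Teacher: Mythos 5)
Your proposal is correct and follows essentially the same route as the paper: orthogonality to gradients gives $\Div v=0$ and the vanishing weak normal trace, the $H^{1/2}$ regularity and $L^2$ tangential trace come from the div--curl estimate of Mitrea--Mitrea--Taylor (the paper cites \cite[Theorem 11.2]{MMT} in exactly this role), and the eigenfield smoothness follows from $\De v+\mu^2v=0$ and (analytic) elliptic regularity. The only cosmetic difference is that you phrase the key input as a qualitative div--curl regularity statement while the paper quotes the explicit a priori inequality, but the content is identical.
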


\begin{proof}
Let $v\in\Dom$. Since $v$ is orthogonal in $L^2$ to any curl-free vector field, it
follows that
\[
0=\int_\Om \nabla\vp\cdot v \, dx=- \int_\Om \vp\, \Div v \, dx
\]
for all $\vp\in C^1_c(\Om)$, which implies that $\Div v=0$ (in the sense of distributions). Since
$v$ and $\curl v$ are in $L^2(\Om)$, it follows (see
e.g.~\cite[Section~4]{Buffa}) that the nontangential trace $N\cdot v\restr$ is
well defined in $H^{-1/2}(\pd\Om)$. Taking an arbitrary function
$\psi\in C^1(\BOm)$ and using that $v$ is divergence-free, one finds that
\[
0=\int_\Om v\cdot \nabla\psi\, dx = \int_{\pd\Om}\psi\, N\cdot
v\restr\, dS\,,
\]
which ensures that $N\cdot v\restr=0$. Since for any divergence-free
vector field on~$\Om$ one has~\cite[Theorem 11.2]{MMT}
\[
\|v\|_{H^{\frac12}(\Om)} + \| N\times v\restr\|_{L^2(\pd\Om)} \leq
C\big( \|v\|_{L^2(\Om)}+ \|\curl v\|_{L^2(\Om)} + \|N\cdot v\restr\|_{L^2(\pd\Om)} \big)\,,
\]
the first part of the statement follows. To complete the proof of the
proposition, we observe that a solution of
\[
\curl v= \mu v
\]
also satisfies
\[
\De v+\mu^2 v=0
\]
in~$\Om$, so it is of class $C^\infty(\Om)$ by elliptic
regularity (in fact, $v$ is real-analytic). The same proof works, mutatis mutandis, when $v\in\Harm$.
\end{proof}

\begin{remark}\label{R.regularity}
If the domain~$\Om$ is smooth, it is classical that a vector field
$v\in\Dom$ is of class~$H^1(\Om)$ (and therefore $N\times v\restr\in
H^{1/2}(\pd\Om)$) as one can employ the classical estimate
\[
\|v\|_{H^1(\Om)}\leq C\big( \|v\|_{L^2(\Om)}+ \|\curl v\|_{L^2(\Om)}\big)\,,
\]
using that $\Div v= 0$. If the domain is only Lipschitz,
however the $H^{1/2}$~bound is generally sharp (see e.g.~\cite[p.~87]{MMT}). It should be noticed that, for
all $v\in\Dom$,
\[
|v|^2\restr= |N\times v\restr|^2
\]
is in $L^1(\pd\Om)$.
\end{remark}

Theorem~\ref{T.sa} implies that the spectrum of curl is as described in the introduction. We finish this section noticing that the first positive (or negative) curl eigenvalue $\mu_1(\Om)$ (resp. $\mu_{-1}(\Om)$) is bounded from below by a constant that only depends on the volume $|\Om|$. This is a sort of Faber-Krahn estimate (alveit non-sharp), whose proof is exactly the same (mutatis mutandis) as the proof for $C^2$ domains presented in~\cite{EP21} (one only needs to use the Hodge decomposition for Lipschitz domains, see e.g.~\cite[Proposition~11.3]{MMT}). We state it here for future reference.

\begin{theorem}\label{T.FK}
For any bounded Lipschitz domain $\Om\subset\RR^3$,
\[
\min\{\mu_1(\Om),-\mu_{-1}(\Om)\} \geq \bigg(\frac{4\pi}{3|\Om|}\bigg)^{1/3}\,.
\]
\end{theorem}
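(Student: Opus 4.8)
The plan is to reduce the estimate to a bound on the operator norm of $\curl^{-1}$ and then control that norm by a Newtonian-potential estimate combined with a rearrangement inequality, exactly as for $C^2$ domains in~\cite{EP21}. By Theorem~\ref{T.sa}, $\curl^{-1}$ is a compact self-adjoint operator on $\cK(\Om)$ whose nonzero eigenvalues are the reciprocals $\mu_k(\Om)^{-1}$; hence its operator norm equals $\big(\min\{\mu_1(\Om),-\mu_{-1}(\Om)\}\big)^{-1}$, and since $r^{-1}=(4\pi/(3|\Om|))^{1/3}$ with $r:=(3|\Om|/(4\pi))^{1/3}$, it suffices to prove
\[
\|\curl^{-1}\|_{\mathrm{op}} \leq r\,.
\]

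To bound the operator norm, fix $w\in\cK(\Om)$ and set $v:=\curl^{-1}w\in\Dom$, so that $\curl v=w$, $\Div v=0$ and $N\cdot v\restr=0$ by Proposition~\ref{P.bc}. The key point is to compare $v$ with the Biot--Savart field of the zero extension $\tw$ of $w$ to $\RR^3$,
\[
V(x):=\BS(\tw)(x)=\int_\Om \frac{w(y)\times(x-y)}{4\pi|x-y|^3}\,dy, \qquad x\in\RR^3\,.
\]
Because $w$ is divergence-free and tangent to $\pd\Om$, its zero extension $\tw$ is divergence-free on all of $\RR^3$ in the distributional sense, so that $\curl V=\tw$; in particular $\curl V=w$ in $\Om$. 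Consequently $V-v$ is curl-free in $\Om$, and directly from the definition of $\cK(\Om)$ (which is $L^2$-orthogonal to every curl-free field) we get $\int_\Om (V-v)\cdot v\,dx=0$, whence $\|v\|_{L^2(\Om)}^2=\int_\Om V\cdot v\,dx\le \|V\|_{L^2(\Om)}\|v\|_{L^2(\Om)}$. Here the Hodge decomposition for Lipschitz domains~\cite[Proposition~11.3]{MMT} is what underlies the whole construction of $\curl^{-1}$ on $\cK(\Om)$ in Theorem~\ref{T.sa}, guaranteeing the clean orthogonal splitting of $L^2(\Om)$ that makes this comparison legitimate.

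It therefore remains to estimate $\|V\|_{L^2(\Om)}$. Using $|a\times b|\le|a||b|$ we obtain the pointwise domination $|V(x)|\le \int_\Om \frac{|w(y)|}{4\pi|x-y|^2}\,dy$, which reduces matters to the scalar convolution operator with kernel $(4\pi|z|^2)^{-1}$. Since this kernel is symmetric, the Schur test with constant weights bounds its $L^2(\Om)\to L^2(\Om)$ norm by $\sup_{x\in\Om}\int_\Om (4\pi|x-y|^2)^{-1}\,dy$. For each fixed $x$ the integrand is radially decreasing, so by the bathtub (Hardy--Littlewood) rearrangement inequality this integral is maximized, among sets of volume $|\Om|$, by the centered ball $B_r(x)$, for which $\int_{B_r(x)}(4\pi|x-y|^2)^{-1}\,dy=r$. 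Hence $\|V\|_{L^2(\Om)}\le r\|w\|_{L^2(\Om)}$, and combining with the previous paragraph gives $\|v\|\le r\|w\|$, i.e. $\|\curl^{-1}\|_{\mathrm{op}}\le r$, as desired.

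The main technical obstacle is the comparison step: on a merely Lipschitz domain one must rigorously justify that the zero extension of $w$ is divergence-free across $\pd\Om$ (so that $\curl V=w$ with no gradient correction) and that $V-v$ is genuinely an $L^2$ curl-free field orthogonal to $v$. Both points rely on the trace and regularity results of Proposition~\ref{P.bc}, together with the Lipschitz Hodge decomposition; once these are in place the remaining estimates are the elementary Schur and rearrangement bounds above.
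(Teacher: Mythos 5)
Your proof is correct and follows essentially the same route the paper intends: Theorem~\ref{T.FK} is stated there with a pointer to the argument of~\cite{EP21}, which is exactly this reduction to the Biot--Savart operator via orthogonality to curl-free fields, followed by the Schur/Cauchy--Schwarz bound with the kernel $|x-y|^{-2}$ and the observation that $\sup_x\int_\Om|x-y|^{-2}\,dy$ is maximized by the ball of the same volume (the same scheme appears explicitly in the paper's Lemma~\ref{L5} for cylinders). Your handling of the Lipschitz-regularity issues (distributional divergence of the zero extension and the $L^2$-orthogonality defining $\cK(\Om)$) is precisely the ``mutatis mutandis'' adjustment the paper alludes to.
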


\subsection{The spectral problem on convex domains}\label{S.convpre}

In what follows, let $\Om$ be a bounded convex domain. We infer from Theorem~\ref{T.sa} that curl defines a self-adjoint operator with domain $\Dom$ and discrete spectrum. It turns out that the convexity of $\Om$ allows one to improve the regularity obtained for general Lipschitz domains in Proposition~\ref{P.bc}.

\begin{lemma}
	\label{L.rc}
Let $\Om$ be a bounded convex domain. If $v\in\Dom\cup \Harm$ then $v\in H^1(\Om)$. Moreover, if $v$ satisfies the equation $\curl v=\mu v$ in $\Om$ for some real constant $\mu$, then $|v|^2\in W^{1,\frac{3}{2}}(\Omega)$. In particular, $|v|^2\restr\in W^{\frac{1}{3},\frac{3}{2}}(\partial\Omega)$.
\end{lemma}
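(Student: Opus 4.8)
The plan is to upgrade the $H^{1/2}$ regularity from Proposition~\ref{P.bc} to full $H^1(\Om)$ using the convexity of $\Om$, and then to bootstrap this to control $|v|^2$ and its boundary trace. The starting point is the regularity theory for the vector Laplacian on convex domains: for a convex (hence Lipschitz) domain, the classical estimate
\[
\|v\|_{H^1(\Om)}\leq C\big( \|v\|_{L^2(\Om)}+ \|\curl v\|_{L^2(\Om)}+\|\Div v\|_{L^2(\Om)}\big)
\]
is known to hold with the full $H^1$ norm on the left (this is the point where convexity, rather than mere Lipschitz regularity, is essential, since for general Lipschitz domains only the $H^{1/2}$ bound of Remark~\ref{R.regularity} survives). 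I would cite the convex-domain version of this inequality — it follows from the integration-by-parts identity of Grisvard/Costabel-Dauge type for convex polyhedra and its extension to general convex domains by approximation, the boundary term having a favorable sign precisely because the second fundamental form of a convex domain is nonnegative. Since $v\in\Dom\cup\Harm$ satisfies $\Div v=0$ and $\curl v\in L^2(\Om)$, the right-hand side is finite, so $v\in H^1(\Om)$.

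The second assertion follows by combining this $H^1$ regularity with the analyticity of $v$ in the interior from Proposition~\ref{P.bc}. Writing $|v|^2$, its weak gradient is $\grad |v|^2 = 2\,(\grad v)^{\!\top} v$ (a contraction of $\grad v$ against $v$), so I would estimate
\[
\big\||v|^2\big\|_{W^{1,\frac32}(\Om)} \leq C\Big(\|v\|_{L^3(\Om)}^2 + \|\grad v\|_{L^2(\Om)}\,\|v\|_{L^6(\Om)}\Big)
\]
via Hölder with exponents $\tfrac{3}{2}=(\tfrac12+\tfrac16)^{-1}$. Each factor is finite: $v\in H^1(\Om)\hookrightarrow L^6(\Om)$ by the Sobolev embedding in dimension three, which also gives $v\in L^3$, while $\grad v\in L^2$ is exactly the $H^1$ bound just established. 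Hence $|v|^2\in W^{1,\frac32}(\Om)$.

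For the boundary statement, I would invoke the trace theorem for fractional Sobolev spaces: the trace operator maps $W^{1,\frac32}(\Om)$ continuously into $W^{1-\frac{1}{3/2},\frac32}(\pd\Om)=W^{\frac13,\frac32}(\pd\Om)$, where the loss of $\tfrac{1}{p}=\tfrac23$ derivatives is the standard trace exponent. Since a convex domain has Lipschitz boundary, this trace theorem is available (see e.g.\ the references on Sobolev spaces on Lipschitz domains already used in the paper), and applying it to $|v|^2$ yields $|v|^2\restr\in W^{\frac13,\frac32}(\pd\Om)$, completing the proof.

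The main obstacle is the first step: proving the full $H^1(\Om)$ estimate on a general \emph{nonsmooth} convex domain, rather than on a smooth or polyhedral one. The clean integration-by-parts argument produces a boundary curvature term whose sign is controlled by convexity, but making this rigorous without a $C^2$ boundary requires an approximation scheme — exhausting $\Om$ by smooth convex domains $\Om_j\nearrow\Om$, establishing uniform $H^1$ bounds, and passing to the limit while ensuring the traces and the divergence-free condition are preserved. Care is needed because convexity is exactly what keeps the limiting constant $C$ from blowing up, so the uniformity of the estimate under the approximation is the delicate point; everything after $v\in H^1(\Om)$ is routine Hölder and trace-theorem bookkeeping.
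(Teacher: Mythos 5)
Your proposal follows essentially the same route as the paper's proof: the paper obtains the $H^1$ bound by citing the convex-domain regularity theorem of Amrouche--Bernardi--Dauge--Girault \cite{ABDG98} (rather than re-deriving the integration-by-parts/approximation argument you sketch), and then performs exactly your computation --- $\nabla|v|^2=2(\nabla v)^{\top}v$, the H\"older bound $\|\nabla|v|^2\|_{L^{3/2}(\Om)}\leq C\|v\|_{L^6(\Om)}\|\nabla v\|_{L^2(\Om)}$, the Sobolev embedding $H^1(\Om)\hookrightarrow L^6(\Om)$, and the standard trace theorem into $W^{\frac13,\frac32}(\pd\Om)$. One correction: the $H^1$ estimate you display is false as stated, even on convex domains, without a boundary condition (take $v=\nabla h$ for a harmonic $h$ with an $r^{1/2}$-type singularity along an edge of a convex prism: then $v,\ \curl v,\ \Div v$ are all in $L^2$ but $v\notin H^1$); the correct hypothesis is that $v\cdot N\restr=0$ or $v\times N\restr=0$, which your fields do satisfy by Proposition~\ref{P.bc}, so the application --- though not the displayed inequality --- is sound.
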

\begin{proof}
Obviously, $\curl v\in L^2(\Om)$, and $\Div v=0$ and $N\cdot v\restr=0$ by Proposition~\ref{P.bc}. The convexity of $\Omega$ then implies that $v\in H^1(\Omega)$, cf.~\cite[Theorem 2.17]{ABDG98}. Now assume that $\curl v=\mu v$ for some $\mu\in\RR$. It follows from the well known identity
\[
\frac12 \nabla|v|^2=\nabla_vv+v\times\curl v=\nabla_vv
\]
and H\"older inequality, that
\[
\frac12\|\nabla|v|^2\|_{L^\frac{3}{2}(\Om)}= \|\nabla_vv\|_{L^\frac{3}{2}(\Om)}\leq C\|v\|_{L^6(\Om)}\|\nabla v\|_{L^2(\Om)}\,.
\]
Noticing that $v\in L^6(\Om)$ by Sobolev embedding, so in particular $|v|^2\in L^3(\Omega)$, we infer that $|v|^2\in W^{1,\frac{3}{2}}(\Omega)$. The standard trace inequality implies the claim about the trace regularity.
\end{proof}

The last result of this section shows the equivalence between strongly convex and stably convex domains, provided that they are regular enough. This characterization is relevant in view of Proposition~\ref{MT3} in Section~\ref{analytic}.

\begin{proposition}\label{L.stable}
Let $\Om$ be a bounded domain of class $C^2$. Then it is strongly convex if and only if it is stable. Moreover, a $C^{1,1}$ strongly convex domain is stable.
\end{proposition}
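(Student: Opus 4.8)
The plan is to prove the two nontrivial implications separately, observing first that $C^2\subset C^{1,1}$, so the ``strongly convex $\Rightarrow$ stable'' half of the equivalence for $C^2$ domains is already contained in the final assertion. Since for a $C^2$ domain strong convexity is equivalent to the second fundamental form $\mathrm{II}$ of $\pd\Om$ being positive definite everywhere (equivalently, positive Gauss curvature, as recalled above), what remains is: (A) a $C^{1,1}$ strongly convex domain is stable; and (B) a $C^2$ stable domain is strongly convex. I would then deduce the $C^2$ equivalence from (A) and (B).

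For (A) I would first extract a quantitative lower curvature bound from the definition. The hypothesis gives a $C^1$ strongly convex defining function $f_0$ with constant $\si$; feeding a point $y\in\Om$ into the strong convexity inequality at $x\in\pd\Om$ shows that $\Om$ is contained in the ball of radius $|\grad f_0(x)|/\si$ tangent at $x$, so $\pd\Om$ satisfies a uniform exterior-ball (Blaschke) condition and its a.e.-defined principal curvatures are bounded below by some $\ka_0>0$. I then pass to the signed distance $d$ to $\pd\Om$ (negative inside), which is $C^{1,1}$ on a tubular neighborhood $U=\{|d|<\de\}$ with $|\grad d|=1$, and set $f:=d+\tfrac{c}{2}d^2$ for a fixed $c>0$; on $U$ one has $\{f<0\}=\Om$ and $f\in C^{1,1}$. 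A short computation gives, a.e.\ on $U$,
\[
D^2 f=(1+cd)\,D^2 d+c\,\grad d\otimes\grad d,
\]
whose eigenvalues on $\pd\Om$ are the principal curvatures $\ka_i\geq\ka_0$ (tangential) together with $c$ (normal), so after shrinking $\de$ one has $\si_0 I\leq D^2 f\leq M I$ a.e.\ on $U$ for some $\si_0,M>0$. Now let $\Phi^t$ be the flow of a smooth compactly supported field, put $\Psi:=\Phi^{-t}$, and note $\{f\circ\Psi<0\}$ coincides with the $C^{1,1}$ domain $\Phi^t(\Om)$ on the neighborhood $\Phi^t(U)$ of $\pd\Phi^t(\Om)$. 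Since $\Phi^t\to\id$ in $C^2$ on compact sets and equals the identity off a fixed compact set, $\|D\Psi-I\|$ and $\|D^2\Psi\|$ are $O(t)$, and differentiating the composition gives, a.e.,
\[
D^2(f\circ\Psi)=(D\Psi)^{T}(D^2 f\circ\Psi)(D\Psi)+\sum_k(\pd_k f\circ\Psi)\,D^2\Psi^k\geq \si_0(1-Ct)^2 I-C't\,I\geq\tfrac{\si_0}{2}I
\]
for $|t|$ small. Thus $f\circ\Psi$ is strongly convex on $\Phi^t(U)$, so $\pd\Phi^t(\Om)=\{f\circ\Psi=0\}$ is a locally convex $C^{1,1}$ hypersurface, and a local-to-global convexity theorem (van Heijenoort) yields that $\Phi^t(\Om)$ is convex. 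Hence $\Om$ is stable. It is precisely here that strong convexity is indispensable: the lower Hessian bound $\si_0 I$ is what absorbs the $O(t)$ error produced by the deformation.

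For (B) I would argue the contrapositive. If the $C^2$ convex domain $\Om$ is not strongly convex then, since $\mathrm{II}\geq0$ everywhere, there is $p\in\pd\Om$ and a principal direction $e_1$ with $\mathrm{II}_p(e_1,e_1)=0$, whence $\mathrm{II}_p(e_1,\cdot)=0$. Taking a normal variation $v=\psi N$, cut off to be compactly supported, the first variation of the second fundamental form has the form $\dot{\mathrm{II}}(e_1,e_1)=-\mathrm{Hess}_{\pd\Om}\psi(e_1,e_1)+\psi\,(\mathrm{II}^2)(e_1,e_1)$, and the last term vanishes at $p$ because $e_1$ is a null direction of $\mathrm{II}$. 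Choosing $\psi$ with $\mathrm{Hess}_{\pd\Om}\psi(e_1,e_1)>0$ at $p$ forces $\dot{\mathrm{II}}_p(e_1,e_1)<0$; since $\mathrm{II}_p(e_1,e_1)=0$, the boundary $\pd\Phi^t(\Om)$ acquires a negative second fundamental form near $p$ for all small $t>0$, so $\Phi^t(\Om)$ fails to be convex (its tangent plane at the image of $p$ does not support it). Therefore $\Om$ is not stable, which proves (B).

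The hard part is the $C^{1,1}$ case (A): one must produce a defining function with two-sided Hessian bounds that are available only \emph{a.e.}, and then upgrade the resulting local convexity of $\pd\Phi^t(\Om)$ to genuine convexity of the domain. By contrast, the $C^2$ forward implication is routine, since there $\mathrm{II}$ is continuous, positive definiteness is a $C^2$-open condition preserved by the $C^2$-small deformation $\Phi^t$, and the Hadamard ovaloid theorem gives convexity directly. The remaining delicate points---the $C^{1,1}$ regularity and convexity of the signed distance, the validity of the a.e.\ Hessian computation under composition with the smooth flow, and the $C^{1,1}$ version of the local-to-global convexity statement---are technical but standard.
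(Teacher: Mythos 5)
Your argument is correct, but for the main implication (a $C^{1,1}$ strongly convex domain is stable) it takes a genuinely different and heavier route than the paper. The paper never touches second derivatives: it keeps the \emph{global} strongly convex defining function $f\in C^{1,1}(\RR^3)$ supplied by the definition and shows directly that $f\circ\Phi^t$ is again strongly convex on all of $\RR^3$, using only the first-order monotonicity inequality $(\nabla f(x)-\nabla f(y))\cdot(x-y)\geq\si|x-y|^2$, the identity $\nabla f\circ\Phi^t=[(D\Phi^t)^T]^{-1}\nabla(f\circ\Phi^t)$ with $[(D\Phi^t)^T]^{-1}=I+O(\de)$, and the Lipschitz bound on $\nabla f$ to absorb the $O(\de)$ errors; since a globally strongly convex function has convex sublevel sets, convexity of $\Phi^{-t}(\Om)$ follows at once, with no local-to-global step. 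You instead discard the given defining function, rebuild a local one from the signed distance, run the computation at the level of a.e.\ Hessians, and must then invoke a van Heijenoort--type local-to-global convexity theorem because your two-sided Hessian bounds live only on a tubular neighborhood. This can be made to work (the $C^{1,1}$ regularity of the signed distance, the a.e.\ chain rule under composition with a smooth diffeomorphism, and the local-to-global statement for compact embedded locally convex surfaces are all available), but each is an extra ingredient the paper avoids; what your route buys is a concrete geometric picture via a uniform lower bound on the a.e.\ principal curvatures, together with an explicit argument (the first variation of the second fundamental form at a null direction) for the implication ``$C^2$ stable $\Rightarrow$ strongly convex'', which the paper dismisses as obvious since for $C^2$ domains both notions are characterized by positivity of the Gauss curvature. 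One terminological slip: containment of $\Om$ in a tangent ball of radius $|\nabla f_0(x)|/\si$ is an enclosing-ball condition (it bounds the principal curvatures from \emph{below}), not an exterior-ball condition; the estimate itself is correct.
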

\begin{proof}
The first claim is obvious because for $C^2$ domains, strong and stable convexity are both characterized by positivity of the Gauss curvature. To prove the second claim, let $\Om$ be a bounded strongly convex domain and $f\in C^{1,1}(\RR^3)$ a defining function (there is no loss of generality in assuming that the Lipschitz constant of $\nabla f$ is uniform on $\RR^3$). If $\Phi^t$, $|t|\leq \delta$, is the flow of a smooth compactly supported vector field on $\RR^3$, it is obvious that the domain $\Phi^{-t}(\Om)$ is defined by the function $f\circ \Phi^t\in C^{1,1}(\RR^3)$. We claim that $f\circ \Phi^t$ is strongly convex if $\delta$ is small enough. Indeed, since $f$ is strongly convex we can write
\[
(\nabla f(\Phi^t(x))-\nabla f(\Phi^t(y)))\cdot (\Phi^t(x)-\Phi^t(y))\geq \sigma |\Phi^t(x)-\Phi^t(y)|^2
\]
for some $\sigma>0$ and all $x,y\in\RR^3$. Using that
\[
|\Phi^t(x)-\Phi^t(y)|\geq |x-y|-|(\Phi^t(x)-x)-(\Phi^t(y)-y)|
\]
and the estimate
\[
|(\Phi^t(x)-x)-(\Phi^t(y)-y)|\leq \text{sup}_{\RR^3}\|D\Phi^t-I\|\cdot|x-y|\leq C\delta |x-y|\,,
\]
which follows from the mean value theorem, we infer that
\begin{equation}\label{e1}
(\nabla f(\Phi^t(x))-\nabla f(\Phi^t(y)))\cdot (\Phi^t(x)-\Phi^t(y))\geq \sigma (1-C\delta)^2 |x-y|^2\geq \frac{\sigma}{2}|x-y|^2
\end{equation}
if $\delta$ is small enough.
Next, noticing that
\[
\nabla f(\Phi^t(x))=\big[(D\Phi^t(x))^T\big]^{-1}\nabla (f\circ \Phi^t)(x)
\]
and
\[
\big[(D\Phi^t)^T\big]^{-1}=I+O(\delta)\,,
\]
we conclude that
\begin{align}
\nonumber (\nabla f(\Phi^t(x))-\nabla f(\Phi^t(y)))&\cdot (\Phi^t(x)-\Phi^t(y))\leq (\nabla (f\circ \Phi^t)(x)-\nabla (f\circ \Phi^t)(y))\cdot (x-y)
\\ \nonumber&+C\delta|\nabla (f\circ \Phi^t)(x)-\nabla (f\circ \Phi^t)(y)|\cdot|x-y|\leq \\
&(\nabla (f\circ \Phi^t)(x)-\nabla (f\circ \Phi^t)(y))\cdot (x-y)+CL\delta|x-y|^2 \,,\label{e2}
\end{align}
where we have used again that $\delta$ is small, and the last inequality follows from the Lipschitz estimate
\[
|\nabla (f\circ \Phi^t)(x)-\nabla (f\circ \Phi^t)(y)|\leq L|x-y|
\]
for all $x,y\in \RR^3$ and some $L>0$.
Putting together Equations~\eqref{e1} and~\eqref{e2} we infer that
\[
(\nabla (f\circ \Phi^t)(x)-\nabla (f\circ \Phi^t)(y))\cdot (x-y)\geq \frac{\sigma}{4}|x-y|^2
\]
for all $x,y\in \RR^3$, which completes the proof of the proposition.
\end{proof}

\section{Existence of optimal convex domains}\label{proof}

In this section we establish the first part of Theorem~\ref{T1}. Our proof follows the reasoning presented in~\cite[Theorem 2.4.1]{Hen06} for the first eigenvalue of the Dirichlet Laplacian. To achieve this we first show the continuity of the first curl eigenvalue with respect to the Hausdorff metric, and then we prove a new estimate for the curl operator implying that the first curl eigenvalue is not uniformly bounded for sequences of convex domains whose diameter tends to infinity.

We recall that if $K_1,K_2\subset \mathbb{R}^3$ are (non-empty) compact sets then the Hausdorff distance $d_H$ between them is defined as
$$d_H(K_1,K_2):=\max\{\sup_{x\in K_1}d(x,K_2),\sup_{y\in K_2}d(y,K_1) \}\,,$$
where $d(x,K_1):=\min_{y\in K_1}|x-y|$.

\subsection{Step~1: Continuity and monotonicity of the eigenvalues}

We first prove two elementary properties of the first curl eigenvalue, which are reminiscent of the properties of the Dirichlet eigenvalues of the Laplacian. In the statement we use the notation $\lambda \Om:=\{\lambda x:\,x\in \Om\}$ for the scaling of a domain $\Om$ for some $\lambda>0$.
\begin{lemma}[Scaling property]
	\label{L.Scaling}
	Let $\Omega\subset \mathbb{R}^3$ be a bounded Lipschitz domain and $\lambda$ a positive constant. Then $\mu_1(\lambda \Omega)=\frac{\mu_1(\Omega)}{\lambda}$.
\end{lemma}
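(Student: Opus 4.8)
The plan is to exploit the natural scaling behaviour of the curl eigenvalue equation directly at the level of eigenfields. Suppose $u$ is a curl eigenfield on $\Omega$ associated with $\mu_1(\Omega)$, so that $\curl u = \mu_1(\Omega)\, u$ in $\Omega$ with $u\in\Dom$. I would define the rescaled field $v$ on $\lambda\Omega$ by $v(x):=u(x/\lambda)$ for $x\in\lambda\Omega$. The key computation is that differentiation picks up a factor $\lambda^{-1}$ under this change of variables, so that $\curl v(x) = \lambda^{-1}(\curl u)(x/\lambda) = \lambda^{-1}\mu_1(\Omega)\, u(x/\lambda) = \frac{\mu_1(\Omega)}{\lambda}\, v(x)$. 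Thus $v$ is an eigenfield of curl on $\lambda\Omega$ with eigenvalue $\mu_1(\Omega)/\lambda$.

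The next step is to verify that this correspondence $u\mapsto v$ maps the operator domain $\cD_\Omega$ to $\cD_{\lambda\Omega}$ and is a bijection between the two function spaces, so that it genuinely transfers spectral data. Concretely, I would check that $v\in\cK(\lambda\Omega)$: the divergence-free condition and the vanishing of the normal trace $N\cdot v\restrt$ are invariant under scaling (scaling a domain does not change its normal directions, and $\Div v(x)=\lambda^{-1}(\Div u)(x/\lambda)=0$), and orthogonality to curl-free fields is preserved because the change of variables $x\mapsto x/\lambda$ induces, up to the Jacobian factor $\lambda^{3}$, an isomorphism of $L^2$ that carries gradients to gradients. Likewise $\curl v\in\cK(\lambda\Omega)$, so indeed $v\in\cD_{\lambda\Omega}$. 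Since the map is invertible (its inverse is the analogous rescaling by $\lambda^{-1}$), it sets up a bijection between the eigenfields of $\curl$ on $\Omega$ and on $\lambda\Omega$, multiplying every eigenvalue by $\lambda^{-1}$.

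Finally I would conclude that this eigenvalue-preserving (up to the factor $\lambda^{-1}$) correspondence respects the ordering and sign of the spectrum: positive eigenvalues map to positive eigenvalues since $\lambda>0$, and the bijection preserves multiplicities and the monotone labelling. Hence the smallest positive eigenvalue of $\curl$ on $\lambda\Omega$ equals $\lambda^{-1}$ times the smallest positive eigenvalue on $\Omega$, which is exactly $\mu_1(\lambda\Omega)=\mu_1(\Omega)/\lambda$.

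I do not expect any serious obstacle here, as the statement is essentially dimensional analysis made rigorous. The only point requiring a little care is the bookkeeping for the function spaces under the scaling—verifying that $\cK$, the nontangential trace conditions, and the operator domain $\cD$ all transform correctly—but each of these is a routine change-of-variables check rather than a genuine difficulty. The one subtlety worth flagging is that, because of the Lipschitz (rather than smooth) regularity, one should phrase the invariance of the trace and of the orthogonality condition in a way that does not presuppose classical traces; this is handled by noting that the scaling is a smooth diffeomorphism of $\RR^3$ that maps $\Omega$ onto $\lambda\Omega$ and pulls back the relevant $L^2$ and distributional structures faithfully.
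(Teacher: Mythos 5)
Your argument is correct and is essentially the paper's own proof: both rescale an eigenfield via $\hat u_1(x):=u_1(x/\lambda)$, observe that curl picks up a factor $\lambda^{-1}$, and check that the rescaling maps $\cD_\Omega$ bijectively onto $\cD_{\lambda\Omega}$ so that the labelled spectrum transforms accordingly. Your additional remarks on the invariance of $\cK$, the trace condition, and the ordering of the positive spectrum are exactly the ``elementary to check'' details the paper leaves implicit.
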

\begin{proof}
Let $u_1\in \mathcal{D}_{\Om}$ be any eigenfield corresponding to the first (positive) curl eigenvalue $\mu_1$. It is then elementary to check that $\hat{u}_1(x):=u_1\left(\frac{x}{\lambda}\right)$ is in the functional space $\mathcal{D}_{\lambda \Omega}$ and $\operatorname{curl}(\hat{u}_1)=\frac{\mu_1}{\lambda}\hat{u}_1$. We then conclude that the first (positive) eigenvalue of $\operatorname{curl}$ on $\lambda \Omega$ is $\frac{\mu_1}{\lambda}$ as claimed.
	\end{proof}
\begin{lemma}[Monotonicity principle]
	\label{L.MP}
	Let $\Omega_1\subseteq \Omega_2\subset \mathbb{R}^3$ be bounded Lipschitz domains. Then the first (positive) curl eigenvalues $\mu_1(\Omega_i)$, $i=1,2$, satisfy
	\[
	\mu_1(\Omega_1)\geq \mu_1(\Omega_2)\,,
	\]
and equality holds if and only if $\Omega_1=\Omega_2$.
\end{lemma}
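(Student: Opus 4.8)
The plan is to base the argument on the variational (Woltjer) characterization of the reciprocal of the first positive eigenvalue together with a test-field argument by extension by zero. Since, by Theorem~\ref{T.sa}, curl is self-adjoint on $\cK(\Om)$ with compact inverse, the operator $\curl^{-1}$ is compact and self-adjoint on $\cK(\Om)$ and its eigenvalues are precisely $\{1/\mu_k(\Om)\}$; as $\mu_1(\Om)$ is the smallest positive curl eigenvalue, the number $1/\mu_1(\Om)$ is the \emph{largest} eigenvalue of $\curl^{-1}$. The Rayleigh characterization of the top eigenvalue of a compact self-adjoint operator thus gives
\begin{equation}\label{eq:woltjer}
\frac{1}{\mu_1(\Om)}=\max_{0\neq v\in\cK(\Om)}\frac{\int_\Om v\cdot \curl^{-1}v\,dx}{\|v\|_{L^2(\Om)}^2}\,,
\end{equation}
with the maximum attained exactly on the first eigenspace. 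Writing $A=\curl^{-1}v$, the numerator is the helicity of $v$, and I would first record the key \emph{gauge-invariance}: for $v\in\cK(\Om)$ the value $\int_\Om v\cdot A\,dx$ is independent of the chosen vector potential, since any two vector potentials of $v$ differ by a curl-free field, to which $v$ is orthogonal by definition of $\cK(\Om)$.

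Next I would prove the inequality. Let $v_1\in\Dom$ be a first eigenfield on $\Om_1$, so $\curl v_1=\mu_1(\Om_1)v_1$, and let $\tilde v_1$ denote its extension by zero to $\Om_2$. First, $\tilde v_1\in\cK(\Om_2)$: for any curl-free $w\in L^2(\Om_2)$ the restriction $w|_{\Om_1}$ is curl-free, so $\int_{\Om_2}\tilde v_1\cdot w\,dx=\int_{\Om_1}v_1\cdot w|_{\Om_1}\,dx=0$ because $v_1\in\cK(\Om_1)$. Second, the helicity is preserved: taking $A_2=\curl^{-1}\tilde v_1$ on $\Om_2$, its restriction to $\Om_1$ is a vector potential of $v_1$, so by the gauge-invariance above $\int_{\Om_2}\tilde v_1\cdot A_2\,dx=\int_{\Om_1}v_1\cdot A_2\,dx=\int_{\Om_1}v_1\cdot\curl^{-1}v_1\,dx$. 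Since $\|\tilde v_1\|_{L^2(\Om_2)}=\|v_1\|_{L^2(\Om_1)}$, inserting $\tilde v_1$ as a test field in~\eqref{eq:woltjer} for $\Om_2$ yields
\[
\frac{1}{\mu_1(\Om_2)}\geq \frac{\int_{\Om_1}v_1\cdot\curl^{-1}v_1\,dx}{\|v_1\|_{L^2(\Om_1)}^2}=\frac{1}{\mu_1(\Om_1)}\,,
\]
where the last equality uses $\curl^{-1}v_1=v_1/\mu_1(\Om_1)$. This is exactly $\mu_1(\Om_1)\geq\mu_1(\Om_2)$.

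For the equality case one direction is trivial. For the converse, suppose $\mu_1(\Om_1)=\mu_1(\Om_2)$ with $\Om_1\subseteq\Om_2$. Then $\tilde v_1$ attains the maximum in~\eqref{eq:woltjer} on $\Om_2$, hence lies in the first eigenspace of $\curl^{-1}$ on $\Om_2$; equivalently $\curl\tilde v_1=\mu_1(\Om_2)\tilde v_1$ in $\Om_2$, and in particular $\tilde v_1\in\Dom$ on $\Om_2$. By Proposition~\ref{P.bc} the field $\tilde v_1$ is then real-analytic in the connected open set $\Om_2$. If $\Om_1\subsetneq\Om_2$, then $\Om_2\setminus\overline{\Om_1}$ is a nonempty open set: indeed $\Om_2\subseteq\overline{\Om_1}$ together with $\inte\overline{\Om_1}=\Om_1$ (valid for a Lipschitz domain) and $\Om_1\subseteq\Om_2$ would force $\Om_1=\Om_2$. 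On this open set $\tilde v_1$ vanishes identically, so by unique continuation for analytic fields $\tilde v_1\equiv0$ on $\Om_2$, contradicting that it is a nonzero eigenfield. Hence $\Om_1=\Om_2$.

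The main obstacle is the rigorous justification that extension by zero preserves both membership in $\cK$ and the value of the helicity, despite $\curl^{-1}$ being a nonlocal, domain-dependent operator. The crucial point making this work is precisely the gauge-invariance of the helicity on $\cK(\Om)$, i.e.\ its independence of the choice of vector potential, which lets one compute the helicity of $\tilde v_1$ on $\Om_2$ using the potential supplied by $\Om_1$.
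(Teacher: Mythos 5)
Your proposal is correct and follows essentially the same route as the paper: extend a first eigenfield of $\Om_1$ by zero, check membership in $\cK(\Om_2)$ and invariance of the helicity, invoke the variational (Rayleigh/Woltjer) characterization of $\mu_1$, and settle the equality case via analyticity of eigenfields (Proposition~\ref{P.bc}) and unique continuation. Your explicit gauge-invariance justification of the helicity identity, and your careful check that $\Om_2\setminus\overline{\Om_1}$ is a nonempty open set, are welcome refinements of steps the paper leaves as "straightforward."
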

\begin{proof}
Take any vector field $v\in \mathcal{K}(\Omega_1)$. We can then define the vector field
\[
\hat{v}:=\begin{cases}
	v&\text{on }\Omega_1\,,\\
	0&\text{on }\Omega_2\setminus \Omega_1\,,
\end{cases}
\]
which is obviously in $\mathcal{K}(\Omega_2)$. The first part of the lemma then follows from the variational characterization of the first curl eigenvalue (see e.g.~\cite{Gerner}):
\begin{equation*}
\mu_1(\Omega_2)=\inf_{w\in \mathcal{K}(\Omega_2)\text{, }\mathcal{H}(w)>0}\frac{\|w\|^2_{L^2(\Omega_2)}}{\mathcal{H}(w)}\leq \inf_{v\in \mathcal{K}(\Om_1)\text{, }\mathcal{H}(v)>0}\frac{\|\hat{v}\|^2_{L^2(\Omega_2)}}{\mathcal{H}(\hat{v})}
\end{equation*}
\begin{equation}
\label{E.Var}
=\inf_{v\in \mathcal{K}(\Om_1)\text{, }\mathcal{H}(v)>0}\frac{\|v\|^2_{L^2(\Omega_1)}}{\mathcal{H}(v)}=\mu_1(\Omega_1)
\end{equation}
by definition of $\hat{v}$. Here $\mathcal{H}(w)$ is the helicity of $w$, which is defined as
\[
\cH(w):=\int_{\Om_2} \curl^{-1}w\cdot w dx\,,
\]
where $\curl^{-1}$ is the compact self-adjoint operator on $\cK(\Om_2)$ defined by the inverse of the curl. Since $v\in\cK(\Om_1)$, it is straightforward to check that $\cH(\hat v)=\cH(v)$, which is used in Equation~\eqref{E.Var}.

Now suppose that $\mu_1(\Omega_1)=\mu_1(\Omega_2)$ and $\overline\Om_1\subset\Om_2$. Let $u_1\in \mathcal{D}_{\Om_1}$ be a curl eigenfield corresponding to $\mu_1(\Omega_1)$. Then $\hat{u}_1\in \mathcal{K}(\Omega_2)$ defined as above realizes the minimum in the variational characterization of $\mu_1(\Om_2)$ and hence it is an eigenfield of curl in $\mathcal{D}_{\Om_2}$ corresponding to $\mu_1(\Om_2)$. Since $\hat{u}_1$ is real analytic in $\Omega_2$ by Proposition~\ref{P.bc}, and $\hat{u}_1=0$ on $\Omega_2\setminus \Omega_1$, we conclude that $\hat u_1$ is zero everywhere. Accordingly $\Om_1=\Om_2$.
\end{proof}

The main result of this section is the following proposition, which is standard in the case of the Dirichlet Laplacian. For this we denote by $\mathcal{C}(\mathbb{R}^3)$ the set of all bounded convex domains of $\mathbb{R}^3$
\begin{equation}
	\label{ConDef}
\mathcal{C}(\mathbb{R}^3):=\{\Om\subset \mathbb{R}^3:\Om\text{ bounded convex domain} \}\,,
\end{equation}
and the Hausdorff distance is given by $d_H(\Om_1,\Om_2)=d_H(\pd\Om_1,\pd\Om_2)$, cf.~\cite{Wills}. It is standard that $(\mathcal{C}(\mathbb{R}^3),d_H)$ is then a metric space. Further, it is easy to check that if $\{\Omega_n\}\subset \mathcal{C}(\mathbb{R}^3)$ is a sequence satisfying $|\Omega_n|=V$ for some constant $V>0$ and all $n$, and $\Omega_n$ converges to a bounded convex domain $\Omega$ with respect to $d_H$, then $|\Omega|=V$.

\begin{proposition}\label{C.c}
The assignment of the first (positive) curl eigenvalue
$$\mu_1:(\mathcal{C}(\mathbb{R}^3),d_H)\rightarrow (0,\infty)$$
is continuous.
\end{proposition}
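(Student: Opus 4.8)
The plan is to deduce continuity from the scaling and monotonicity properties already established in Lemmas~\ref{L.Scaling} and~\ref{L.MP}, using a sandwiching argument that is special to convex sets. Fix a bounded convex domain $\Om$ and a sequence $\{\Om_n\}\subset\cC(\RR^3)$ with $d_H(\Om_n,\Om)\to0$. Since curl is translation invariant, so is $\mu_1$, and hence there is no loss of generality in assuming that $0\in\inte\Om$: translating the whole sequence by a common vector preserves both the Hausdorff convergence and all the eigenvalues. Because $0\in\inte\Om$ I may pick $r>0$ with $B_r\subset\Om$, so that the support function $h_\Om(u)=\sup_{x\in\Om}x\cdot u$ satisfies $h_\Om(u)\geq r$ for all $|u|=1$.

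The heart of the argument is a two-sided inclusion of scaled copies. For convex bodies the Hausdorff distance between the boundaries agrees with the Hausdorff distance between the closures, which in turn equals $\sup_{|u|=1}|h_{\Om_n}(u)-h_\Om(u)|$; thus $h_{\Om_n}\to h_\Om$ uniformly on the unit sphere. Given $\la\in(0,1)$ and $\mu>1$, the strict gaps
\begin{align*}
h_\Om(u)-h_{\la\Om}(u)&=(1-\la)h_\Om(u)\geq (1-\la)r>0\,,\\
h_{\mu\Om}(u)-h_\Om(u)&=(\mu-1)h_\Om(u)\geq (\mu-1)r>0
\end{align*}
together with this uniform convergence show that, for all $n$ large enough, $h_{\la\Om}<h_{\Om_n}<h_{\mu\Om}$ on the unit sphere. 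By the characterization of inclusions of convex bodies through their support functions, this yields
\[
\la\Om\subseteq\Om_n\subseteq\mu\Om
\]
for all sufficiently large $n$.

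I would then combine this sandwiching with the two established lemmas. The monotonicity principle gives $\mu_1(\mu\Om)\leq\mu_1(\Om_n)\leq\mu_1(\la\Om)$, and the scaling property rewrites the outer terms, so that
\[
\frac{\mu_1(\Om)}{\mu}\leq \mu_1(\Om_n)\leq \frac{\mu_1(\Om)}{\la}
\]
for all large $n$. To conclude, fix $\ep>0$ and choose $\la<1<\mu$ close enough to $1$ that both $\mu_1(\Om)/\la$ and $\mu_1(\Om)/\mu$ lie within $\ep$ of $\mu_1(\Om)$; the display then forces $|\mu_1(\Om_n)-\mu_1(\Om)|<\ep$ eventually, which is exactly continuity of $\mu_1$ at $\Om$.

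The step I expect to be the main obstacle is establishing the inclusion $\la\Om\subseteq\Om_n\subseteq\mu\Om$, since this is precisely where convexity is indispensable: it is what converts mere Hausdorff proximity of boundaries into a genuine nesting of scaled copies, on which the eigenvalue monotonicity can be brought to bear. For general non-convex domains, Hausdorff convergence of boundaries permits thin tentacles or cusps that destroy any such comparison, which is why the analogous statement is delicate for the Dirichlet Laplacian on arbitrary domains but elementary in the convex setting. The remaining ingredients—scaling, monotonicity, and the final squeeze—are routine once the inclusion is in hand.
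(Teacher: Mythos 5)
Your proof is correct and follows essentially the same route as the paper: squeeze the domains between scaled copies of one another and apply the monotonicity and scaling lemmas. The only cosmetic differences are that you sandwich $\Om_n$ between $\la\Om$ and $\mu\Om$ (proving the inclusion directly via support functions) whereas the paper sandwiches $\Om$ between $t_n\Om_n$ and $s_n\Om_n$ by citing a lemma of Colesanti--Fimiani, so your version is slightly more self-contained but otherwise identical in substance.
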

\begin{proof} Let us assume that the sequence $\{\Omega_n\}\subset \mathcal{C}(\mathbb{R}^3)$ converges to some $\Omega\in \mathcal{C}(\mathbb{R}^3)$ in the Hausdorff metric. It then follows, cf. \cite[Lemma 3.3]{CoFi10}, that we can find two sequences $\{t_n\},\{s_n\}\subset \mathbb{R}$ both converging to $1$ such that $t_n\Omega_n\subseteq \Omega\subseteq s_n\Omega_n$. The monotonicity and scaling properties of $\mu_1$ (Lemmas~\ref{L.Scaling} and~\ref{L.MP}) then imply
\[
\frac{\mu_1(\Omega_n)}{s_n}=\mu_1(s_n\Omega_n)\leq \mu_1(\Omega)\leq \mu_1(t_n\Omega_n)=\frac{\mu_1(\Omega_n)}{t_n}.
\]
As $t_n$ and $s_n$ converge to $1$, we may take the $\limsup$ on the left side and the $\liminf$ on the right side to conclude $\mu_1(\Omega)=\lim_{n\rightarrow+\infty}\mu_1(\Omega_n)$, which proves the proposition.
\newline
\end{proof}

\subsection{Step~2: An estimate for sequences of convex domains}
As usual, the diameter of a subset $\Omega\subset \mathbb{R}^3$ is
$$\operatorname{diam}(\Omega):=\sup_{x,y\in \Omega}|x-y|\,.$$
In this step we show that if the diameter of a sequence of convex domains of the same volume tends to infinity then the first curl eigenvalue tends to infinity as well. A key ingredient to prove this result is Lemma~\ref{L.trap} in Section~\ref{app.trapping}, which proves that the aforementioned sequence of domains can be enclosed into a sequence of cylinders whose heights tend to zero and radii tend to infinity in a controlled way. The monotonicity principle then allows us to reduce the problem to study the first curl eigenvalue along the aforementioned sequence of cylinders. While this is a simple task in the case of the Dirichlet Laplacian, because the spectrum can be explicitly computed~\cite[Chapter 1.2.5]{Hen06}, in the case of the curl operator, computing the first curl eigenvalue is notoriously difficult. The main issue is that one cannot easily derive decoupled elliptic equations for the component functions.

\begin{lemma}\label{L5}
Let $\{\Omega_n\}\subset \mathbb{R}^3$ be a sequence of bounded convex domains with $|\Omega_n|=V$ for some $V>0$ and such that $\operatorname{diam}(\Omega_n)\rightarrow \infty$ as $n\rightarrow \infty$. Then $\mu_1(\Omega_n)\rightarrow \infty$.
\end{lemma}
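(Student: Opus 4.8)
The plan is to reduce the estimate to the model case of a cylinder via the monotonicity principle, and then to prove a clean geometric lower bound for the first curl eigenvalue of a flat cylinder that is independent of its radius. First I would invoke the trapping result (Lemma~\ref{L.trap}): since $|\Omega_n|=V$ and $\diam(\Omega_n)\to\infty$, each $\Omega_n$ can be enclosed, after a rigid motion, in a solid cylinder $C_n$ of height $h_n\to 0$ (its radius $R_n\to\infty$, but this will play no role). Because the curl spectrum is invariant under rigid motions, I assume the axis of $C_n$ is the $x_3$-axis, so that the two flat faces lie in $\{x_3=0\}$ and $\{x_3=h_n\}$. As $\Omega_n\subseteq C_n$, the monotonicity principle (Lemma~\ref{L.MP}) gives $\mu_1(\Omega_n)\geq \mu_1(C_n)$, and it suffices to show $\mu_1(C_n)\to\infty$.

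The heart of the argument is the estimate
\[
\mu_1(C)\geq \frac{1}{h}
\]
for any solid cylinder $C$ of height $h$, with no dependence on the radius. By the variational characterization used in the proof of Lemma~\ref{L.MP}, one has $\mu_1(C)=\inf\{\|w\|_{L^2(C)}^2/\cH(w): w\in\cK(C),\ \cH(w)>0\}$, so it is enough to establish the helicity bound $\cH(w)\leq h\,\|w\|_{L^2(C)}^2$ for every $w\in\cK(C)$. To this end I would exploit the thin direction through an explicit vector potential. Since $w$ is divergence-free and tangent to $\pd C$, its third component vanishes on both flat faces, namely $w_3|_{x_3=0}=w_3|_{x_3=h}=0$. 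Setting $A_3:=0$ and
\[
A_1(x):=\int_0^{x_3}w_2(x_1,x_2,s)\,ds,\qquad A_2(x):=-\int_0^{x_3}w_1(x_1,x_2,s)\,ds,
\]
a direct computation using $\Div w=0$ together with $w_3|_{x_3=0}=0$ shows that $\curl A=w$. Moreover, a Cauchy--Schwarz estimate in the $x_3$-variable yields $\|A\|_{L^2(C)}\leq h\,\|w\|_{L^2(C)}$, and it is precisely the smallness of the height that produces the small factor here.

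To conclude, I would use that the canonical potential $\curl^{-1}w\in\cD_C$ and the explicit potential $A$ differ by a gradient, since $C$ is simply connected and $\curl(\curl^{-1}w-A)=0$. Because $w$ is tangent to $\pd C$, integration by parts shows that $\int_C \nabla f\cdot w\,dx=0$, so the helicity is gauge-invariant and
\[
\cH(w)=\int_C \curl^{-1}w\cdot w\,dx=\int_C A\cdot w\,dx\leq \|A\|_{L^2(C)}\|w\|_{L^2(C)}\leq h\,\|w\|_{L^2(C)}^2.
\]
This gives $\mu_1(C)\geq 1/h$, and combining it with the monotonicity step yields $\mu_1(\Omega_n)\geq \mu_1(C_n)\geq 1/h_n\to\infty$, as required. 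I expect the main obstacle to be this cylinder estimate: the key realizations are that the vanishing of the normal trace on the flat faces is exactly what makes the adapted potential $A$ well-defined and controlled by the small height, and that gauge invariance (valid because $w$ is tangent) permits replacing the canonical but implicit potential $\curl^{-1}w$ by the explicit $A$. Once this radius-free bound is in hand, the unboundedness of $R_n$ is harmless and the conclusion follows immediately.
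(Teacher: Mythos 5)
Your proposal is correct, and while the reduction to a thin cylinder via Lemma~\ref{L.trap} plus the monotonicity principle is exactly the paper's strategy, your treatment of the cylinder itself follows a genuinely different route. The paper estimates the helicity through the Biot--Savart operator: it bounds $\|\BS(u_1)\|_{L^2}$ pointwise by the kernel quantity $M=\sup_{x\in C_n}\int_{C_n}|x-y|^{-2}\,dy$, computes $M$ explicitly for the cylinder, and then must invoke the quantitative relation $R_nh_n^2\leq c$ from the trapping lemma to kill the logarithmic term $h_n\ln(1+R_n^2/h_n^2)$; without that relation the kernel bound would not close. Your argument instead constructs an explicit vector potential adapted to the thin direction, $A=\bigl(\int_0^{x_3}w_2,\,-\int_0^{x_3}w_1,\,0\bigr)$, checks $\curl A=w$ using $\Div w=0$ and the vanishing of $w_3$ on a flat face, and uses gauge invariance of helicity on the simply connected cylinder (every $L^2$ curl-free field is a gradient, and $w\in\cK(C)$ is orthogonal to gradients) to obtain $\cH(w)\leq h\,\|w\|_{L^2}^2$ and hence the radius-free bound $\mu_1(C)\geq 1/h$. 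This is cleaner: it needs only $h_n\to0$ and not the relation between $R_n$ and $h_n$, so the unbounded radius is indeed harmless. Two minor points. First, with the paper's normalization $C_n=D_{R_n}\times(-h_n,h_n)$ the height is $2h_n$, so your bound reads $\mu_1(C_n)\geq 1/(2h_n)$, which changes nothing. Second, for a general $w\in\cK(C)$, which is merely $L^2$, the identity $\curl A=w$ and the trace statement $w_3|_{x_3=0}=0$ need a distributional justification; this is most easily sidestepped by applying your estimate only to the first eigenfield $u_1$, which lies in $H^1(C)$ by Lemma~\ref{L.rc} since the cylinder is convex, and that single field is all the variational characterization requires.
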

\begin{proof} 
Lemma~\ref{L.trap} below shows that, up to isometry, we can trap the domains $\Om_n$ into a sequence of cylinders
$$C_n:=D_{R_n}\times (-h_n,h_n)$$
of height $2h_n\to 0$ and of radius $R_n\to\infty$ which satisfy the relation $R_nh^2_n\leq c$ for some uniform constant $c>0$.
Then Lemma~\ref{L.MP} implies that
\[
\mu_1(\Omega_n)\geq \mu_1(C_n)\,.
\]
We claim that the first curl eigenvalue of the cylinder $C_n$ diverges as $n\to\infty$.
Indeed, if $\operatorname{curl}^{-1}$ denotes the compact inverse of curl, then $$\int_{C_n}\operatorname{curl}^{-1} u_1\cdot u_1 dx=\frac{1}{\mu_1(C_n)}\|u_1\|^2_{L^2}$$
for any eigenfield $u_1\in \mathcal{D}_{C_n}$ associated to $\mu_1(C_n)$. Defining the Biot-Savart operator~\cite{JMPA}
\[
\operatorname{BS}(u_1)(x):=\frac{1}{4\pi}\int_{C_n}\frac{u_1(y)\times (x-y)}{|x-y|^3}dy\,,
\]
and using that $u_1$ is $L^2$-orthogonal to the curl-free fields, we find
\begin{equation}
	\label{FEE}
\frac{\|u_1\|^2_{L^2}}{\mu_1(C_n)}=\int_{C_n} \operatorname{BS}(u_1)\cdot u_1 dx\leq \|u_1\|_{L^2}\|\operatorname{BS}(u_1)\|_{L^2}\,.
\end{equation}

To bound the $L^2$ norm of the Biot-Savart operator, we use Cauchy-Schwarz to write
\[
|\operatorname{BS}(u_1)(x)|\leq \frac{1}{4\pi}\Big(\int_{C_n}\frac{|u_1(y)|^2}{|x-y|^2}dy\Big)^{1/2}\Big(\int_{C_n}\frac{1}{|x-y|^2}dy\Big)^{1/2}\,,
\]
and then
\[
\|\operatorname{BS}(u_1)\|^2_{L^2}\leq \frac{M^2}{16\pi^2}\|u_1\|^2_{L^2}\,.
\]
Here the constant $M$ is defined as $M:=\sup_{x\in C_n}\int_{C_n}\frac{1}{|x-y|^2}dy$.

This estimate combined with Equation~\eqref{FEE} imply that
$$\mu_1(C_n)\geq \frac{4\pi}{M}\,.$$
It can be checked~\cite{OHara} (due to the symmetry of the cylinders) that $M=\int_{C_n}\frac{1}{|y|^2}dy$, and a simple integration yields
\[
M=2\pi h_n \ln\left(1+\frac{R^2_n}{h^2_n}\right)+4\pi R_n\arctan\left(\frac{h_n}{R_n}\right)\,.
\]
Finally, using the relation $R_nh^2_n\leq c$ and that $R_n\rightarrow \infty$, we conclude that $M\to 0$ as $n\rightarrow \infty$ and consequently $\mu_1(C_n)\to \infty$. This completes the proof of the lemma.
\end{proof}

\subsection{Step~3: A geometric lemma}\label{app.trapping}

Now we prove that a sequence of convex domains of fixed volume and diameter tending to infinity can be trapped inside a sequence of cylinders of heights that tend to zero. The proof adapts to $\RR^3$ the 2-dimensional argument presented in~~\cite[Theorem 2.4.1]{Hen06}.

\begin{lemma}\label{L.trap}
Let $\{\Omega_n\}\subset \mathbb{R}^3$ be a collection of bounded convex domains of the same volume $V>0$. If $d_n:=\operatorname{diam}(\Omega_n)\rightarrow \infty$ as $n\rightarrow \infty$, then there exists a sequence $h_n>0$ such that $h^2_nd_n\leq c$ for some uniform constant $c>0$ and $\Omega_n\subset D_{2d_n}\times(-h_n,h_n)$ (up to isometry).
\end{lemma}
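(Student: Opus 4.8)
The plan is to deduce the whole statement from the classical theorem of Fritz John on maximal-volume inscribed ellipsoids. Recall that every bounded convex domain $K\subset\RR^3$ contains a unique ellipsoid $E$ of maximal volume and, writing $c$ for the centre of $E$, one has the inclusions $E\subseteq K\subseteq c+3(E-c)$, where the last set is the dilate of $E$ by the factor $3$ about its centre. I would apply this to each $\Omega_n$, obtaining $E_n$ with semi-axes $a_1^{(n)}\geq a_2^{(n)}\geq a_3^{(n)}>0$, and then compose with the rigid motion (a rotation followed by a translation) that carries $c_n$ to the origin and aligns the semi-axes of $E_n$ with the coordinate axes, with the shortest one $a_3^{(n)}$ along the $z$-direction. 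This rotation and translation is precisely the isometry allowed in the statement.

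After this normalization $\Omega_n$ is contained in the ellipsoid with semi-axes $3a_1^{(n)},3a_2^{(n)},3a_3^{(n)}$ centred at the origin, hence in the cylinder $D_{3a_1^{(n)}}\times(-3a_3^{(n)},3a_3^{(n)})$, since the horizontal projection of that ellipsoid is an ellipse of semi-axes $3a_1^{(n)}\geq 3a_2^{(n)}$. I would set $h_n:=3a_3^{(n)}>0$. To control the horizontal radius I use the inclusion $E_n\subseteq\Omega_n$, which gives $2a_1^{(n)}=\diam(E_n)\leq\diam(\Omega_n)=d_n$, so that $3a_1^{(n)}\leq\tfrac32 d_n<2d_n$ and therefore $\Omega_n\subset D_{2d_n}\times(-h_n,h_n)$, as required.

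It remains to bound $h_n^2 d_n$. Here I would use the opposite inclusion $\Omega_n\subseteq c_n+3(E_n-c_n)$, whose diameter equals $6a_1^{(n)}$, to get $d_n\leq 6a_1^{(n)}$, i.e.\ $a_1^{(n)}\geq d_n/6$. Combining this with $|\Omega_n|\geq\Vol(E_n)=\tfrac43\pi a_1^{(n)}a_2^{(n)}a_3^{(n)}$ and $a_2^{(n)}\geq a_3^{(n)}$ yields
\[
V=|\Omega_n|\geq \tfrac43\pi\, a_1^{(n)}\big(a_3^{(n)}\big)^2\geq \tfrac43\pi\cdot\tfrac{d_n}{6}\big(a_3^{(n)}\big)^2=\tfrac{2\pi}{9}\,d_n\big(a_3^{(n)}\big)^2,
\]
so that $h_n^2 d_n=9\big(a_3^{(n)}\big)^2 d_n\leq\tfrac{81}{2\pi}V=:c$, which is a uniform constant because the volume $V$ is fixed along the sequence. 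Since $d_n\to\infty$, this bound also forces $h_n\to0$, matching the behaviour of the trapping cylinders needed in Lemma~\ref{L5}.

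The only genuine obstacle is the geometric estimate $h_n^2 d_n\lesssim V$, equivalently $w^2\diam\lesssim V$ for convex bodies of minimal width $w$. A naive attempt—orient $\Omega_n$ along its minimal width and bound the volume by the area of the horizontal projection times the thickness—fails, because the projected area does \emph{not} bound the volume from below (most vertical fibres could be arbitrarily thin). John's ellipsoid circumvents this by simultaneously pinning down all three semi-axes: the longest $a_1^{(n)}$ is comparable to the diameter, the shortest $a_3^{(n)}$ is comparable to the minimal width, and their product controls the volume. Everything else is elementary bookkeeping with the two John inclusions, and the factor $2d_n$ in the statement is exactly the slack produced by the dilation factor $3$ (giving $3a_1^{(n)}\leq\tfrac32 d_n$).
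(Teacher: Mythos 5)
Your proof is correct, and it takes a genuinely different route from the paper. The paper's argument is elementary and entirely self-contained: it extracts three mutually orthogonal segments $L_3$ (a diametral segment), $L_2$ (a longest segment in a cross-section perpendicular to $L_3$) and $L_1$ (a longest segment perpendicular to the plane of $L_2$ and $L_3$), then runs a proof by contradiction using inscribed pyramids and the basic proportionality theorem to show $|L_1|^2|L_3|\leq c$, finally enclosing $\Omega_n$ in the box $(-d_n,d_n)^2\times(-|L_1|,|L_1|)$. Your argument replaces all of this with a single appeal to John's theorem, $E_n\subseteq\Omega_n\subseteq c_n+3(E_n-c_n)$: the two inclusions pin the longest semi-axis between $d_n/6$ and $d_n/2$, the volume of the inscribed ellipsoid gives $a_1^{(n)}(a_3^{(n)})^2\lesssim V$, and everything else is bookkeeping. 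Each step checks out (the diameter of an ellipsoid is twice its longest semi-axis, the projection of the axis-aligned dilated ellipsoid onto the horizontal plane is the ellipse with semi-axes $3a_1^{(n)}\geq 3a_2^{(n)}$, and $3a_1^{(n)}\leq\tfrac32 d_n<2d_n$). What your route buys is brevity, an explicit constant $c=\tfrac{81}{2\pi}V$, and in fact a per-domain inequality that does not use the hypothesis $d_n\to\infty$ at all; what it costs is the reliance on John's theorem, which is standard but not elementary, whereas the paper's proof only uses convexity directly (adapting the two-dimensional argument in Henrot's book). Either version suffices for the application in Lemma~\ref{L5}.
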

\begin{proof}
Let $L_3\subset \Om_n$ be a line segment that realizes the diameter of $\Om_n$, i.e., $|L_3|=d_n$. We can then consider planes perpendicular to $L_3$ which intersect $\Om_n$. There will be (at least) a plane $H$ such that $\operatorname{diam}(\Om_n\cap H)$ becomes maximal among all such planes. Take again a line segment $L_2\subset\Om_n\cap H$ which realizes $\operatorname{diam}(\Om_n\cap H)$. Since $L_2$ and $L_3$ span a plane, we can consider line segments perpendicular to this plane and intersecting $\Om_n$. As before, there is a line segment $L_1$ which has maximal length among all such lines. Obviously $|L_1|\leq |L_2|$. For notational simplicity, we are omitting the $n$-dependence of $L_1,L_2,L_3$. We claim that there is a uniform constant $c>0$ such that
$$|L_1|^2|L_3|\leq c\,.$$

We argue by contradiction and assume that $|L_1|^2|L_3|$ diverges as $n\to \infty$ (and hence $|L_2|^2|L_3|$ also diverges). Consider line segments contained in the plane $H$ and orthogonal to $L_2$, and denote by $L^\perp_2\subset \Om_n\cap H$ any of these segments with maximal length. Similarly we can consider the plane $H^\prime$ containing $L_1$ and perpendicular to $L_3$ and denote by $L^\perp_1\subset \Om_n\cap H^\prime$ a maximal length line segment. If we connect the end points of $L_2$ and $L^\perp_2$ with the end points of $L_3$, we obtain a pyramid (see Figure~\ref{FigurePyramid}) which, by convexity, is inscribed in $\Om_n$, and hence
$$|L_2||L^\perp_2||L_3|\leq 6V\,.$$
Since $|L_2|\geq |L^\perp_2|$ we infer that $|L^\perp_2|^2|L_3|\leq 6V$. Analogously we obtain
$$|L_1||L^\perp_1||L_3|\leq 6V\,.$$

\begin{figure}[htbp]
		\centering
		\includegraphics[width=1.0\textwidth]{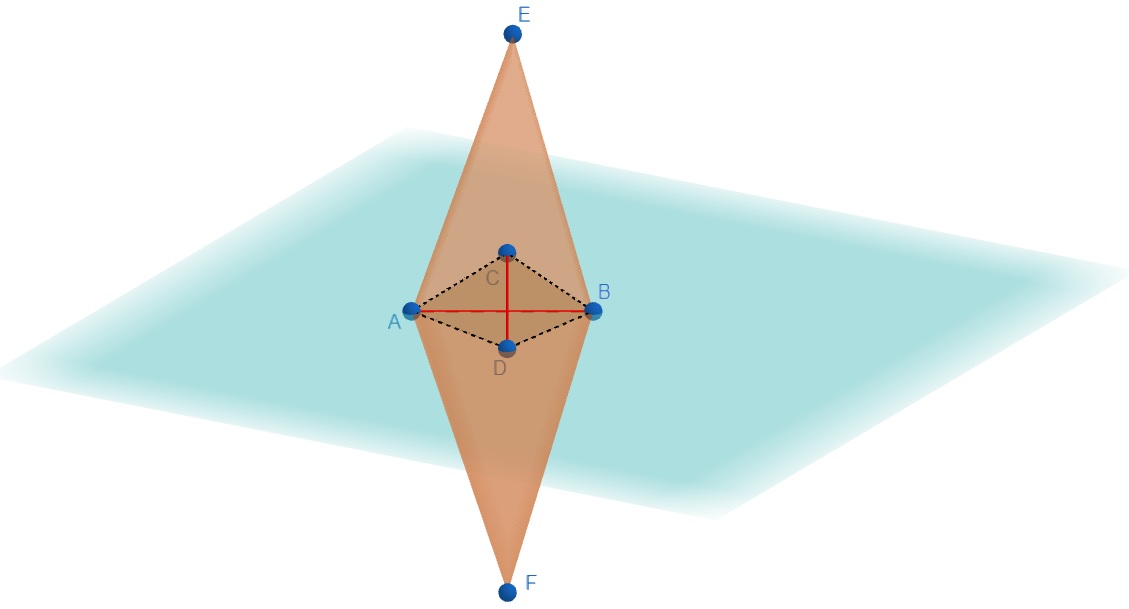}
		\caption{Connecting the end points of the line segments $L_2=\overline{AB}$ and $L^\perp_2=\overline{CD}$ with the end points of the diameter $L_3=\overline{EF}$.}
		\label{FigurePyramid}
\end{figure}

If $|L^\perp_1|\geq |L_1|$ we deduce that $|L_1|^2|L_3|\leq 6V$, which contradicts our blow up assumption. Hence we must have $|L^\perp_1|\leq |L_1|$ and we deduce that
$$|L^\perp_1|^2|L_3|\leq 6V\,.$$
By the definition of $L_1$, if it were contained in the plane spanned by $L_2$ and $L^\perp_2$, then we could have chosen $L^\perp_2=L_1$, which would again yield a contradiction with our blow up assumption. Consequently $L_1$ and $L_2$ must be contained in two distinct parallel planes perpendicular to $L_3$.

The aforementioned planes divide the diameter in $3$ parts (which might be degenerate if one of the endpoints of $L_3$ is contained in one of the planes). Denote by $b_n$ the distance between these parallel planes and $x_n,y_n\in L_3$ the end points of $L_3$. Suppose that starting at $x_n$ and running along $L_3$ we first intersect the plane containing $L_2$ (otherwise, one can start at $y_n$), and denote the distance from $x_n$ to this plane by $a_n$ and the distance from $y_n$ to the other plane by $c_n$. Connecting $x_n$ with the end points of $L_1$, a simple application of the basic proportionality theorem to the resulting triangle, allows us to obtain
\[
\frac{a_n}{b_n}\leq \frac{|L^\perp_2|}{|L_1|-|L^\perp_2|}=\frac{|L^\perp_2||L_3|^{1/2}}{|L_1||L_3|^{1/2}-|L^\perp_2||L_3|^{1/2}}\leq\frac{(6V)^{1/2}}{|L_1||L_3|^{1/2}-(6V)^{1/2}}\,,
\]
which implies, by our blow up assumption, that the ratio $\frac{a_n}{b_n}$ tends to zero. A similar argument shows that the ratio $\frac{c_n}{b_n}$ also tends to zero. Since $d_n=a_n+b_n+c_n$ we conclude that $\frac{b_n}{d_n}$ converges to $1$ as $n\rightarrow\infty$.

Finally, let us consider the polytope $\mathcal{P}$ obtained by connecting the end points of $L_1$ with the end points of $L_2$, which is inscribed in $\Om_n$ by convexity, and whose volume is given by $|\mathcal{P}|=\frac{b_nL_1L_2}{6}$. As the ratio of $b_n$ and $d_n$ tends to $1$ and $|L_3|=d_n$, we find for large enough $n$,
$$V\geq \frac{|L_3||L_1||L_2|}{12}\geq \frac{|L_1|^2|L_3|}{12}\to \infty\,,$$
which is a contradiction, thus implying that $|L_1|^2|L_3|\leq c$ for some uniform $c>0$, as we wanted to show.

Applying an Euclidean motion if necessary, it is then obvious that
$$\Om_n\subset (-d_n,d_n)^2\times (-|L_1|,|L_1|) \subset D_{2d_n}\times(-|L_1|,|L_1|)\,,$$
thus completing the proof of the lemma.
\end{proof}

\subsection{Step~4: Completing the proof}

Let $\{\Omega_n\}$ be a minimizing sequence with fixed volume $V>0$. According to Lemma~\ref{L5} we may assume that the sequence of diameters must be uniformly bounded, and hence there is $R>0$ so that $\{\Omega_n\}\subset B_R$ for all $n$. A simple application of Blaschke's selection theorem implies that there is a subsequence converging to some bounded convex domain $\Omega\subset B_R$ in the Hausdorff sense with $|\Omega|=V$. The theorem follows using the continuity of the first curl eigenvalue, cf. Proposition~\ref{C.c}:
$$\mu_1(\Omega)=\lim_{n\rightarrow\infty}\mu_1(\Omega_n)\,.$$

\subsection{Axially symmetric convex domains}\label{SS.axi}
In this final subsection we consider bounded convex domains $\Om$ that are axially symmetric with respect to some line (up to isometry, there is no loss of generality in assuming that the symmetry line is the $z$ axis). For each positive constant $V$ we show that there exist optimal domains of volume $V$ for the first curl eigenvalue in the aforementioned class of convex sets. The proof is essentially the same as in the previous subsections.

\begin{theorem}\label{T.RotE}
Let $V$ be a positive real number. Then there exists a bounded axially symmetric convex domain $\Om$ of volume $V$ which minimizes $\mu_1$ among all other domains in the same class with the same volume.
\end{theorem}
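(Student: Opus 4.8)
The plan is to rerun the variational argument of Sections~\ref{proof} verbatim, working throughout inside the subclass
\[
\mathcal{C}_{\mathrm{ax}}(\RR^3):=\{\Om\in\mathcal{C}(\RR^3):\Om\text{ is axially symmetric about some line}\}\,,
\]
and to check that the only genuinely new point---the preservation of axial symmetry in the Hausdorff limit---causes no difficulty. Note first that $\mathcal{C}_{\mathrm{ax}}(\RR^3)$ is nonempty (any ball belongs to it) and that $\mu_1$ is bounded below on it by Theorem~\ref{T.FK}, so the infimum $m:=\inf\{\mu_1(\Om'):\Om'\in\mathcal{C}_{\mathrm{ax}}(\RR^3),\,|\Om'|=V\}$ is a finite positive number.

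First I would fix a minimizing sequence $\{\Om_n\}\subset\mathcal{C}_{\mathrm{ax}}(\RR^3)$ with $|\Om_n|=V$ and $\mu_1(\Om_n)\to m$. Since each $\Om_n$ is convex, Lemma~\ref{L5} applies directly: if $\operatorname{diam}(\Om_n)\to\infty$ along a subsequence, then $\mu_1(\Om_n)\to\infty$, contradicting that the sequence is minimizing. Hence, after passing to a subsequence, the diameters are uniformly bounded. Next I would normalise the position of the symmetry axes. As $\mu_1$ is invariant under Euclidean motions, I may apply to each $\Om_n$ a rigid motion $g_n$ carrying its symmetry line onto the $z$ axis, so that $g_n\Om_n$ is axially symmetric about the $z$ axis with $\mu_1(g_n\Om_n)=\mu_1(\Om_n)$ and $|g_n\Om_n|=V$. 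Translating further along the $z$ axis---a motion that preserves axial symmetry about that axis---I may centre the domains; combined with the uniform diameter bound this places all of them in a fixed ball $B_R$. After relabelling, I have a minimizing sequence of domains, all axially symmetric about the $z$ axis and contained in $B_R$.

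Blaschke's selection theorem then yields a subsequence converging in the Hausdorff metric to a bounded convex domain $\Om\subset B_R$, and the volume-stability remark preceding Proposition~\ref{C.c} gives $|\Om|=V>0$, so $\Om$ is a genuine bounded convex domain. The key observation is that axial symmetry passes to the limit: writing $R_\theta$ for the rotation by angle $\theta$ about the $z$ axis, we have $R_\theta\Om_n=\Om_n$ for all $n$, and since $R_\theta$ is an isometry it is continuous with respect to $d_H$, whence
\[
R_\theta\Om=R_\theta\Big(\lim_{n\to\infty}\Om_n\Big)=\lim_{n\to\infty}R_\theta\Om_n=\lim_{n\to\infty}\Om_n=\Om
\]
for every $\theta$, so that $\Om\in\mathcal{C}_{\mathrm{ax}}(\RR^3)$. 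Finally, the continuity of the first curl eigenvalue (Proposition~\ref{C.c}) gives $\mu_1(\Om)=\lim_{n\to\infty}\mu_1(\Om_n)=m$, so $\Om$ is optimal within the class of axially symmetric convex domains of volume $V$.

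The only step requiring care---and the one I would regard as the main (albeit mild) obstacle---is the compatibility between the position normalisation and the symmetry constraint: one must align all axes to the $z$ axis \emph{before} extracting a Hausdorff limit, using only motions (rotations about, and translations along, the $z$ axis) that preserve axial symmetry, so that the limiting domain genuinely inherits the symmetry. All the analytic input---the divergence of $\mu_1$ for the trapping cylinders, Hausdorff continuity of $\mu_1$, and volume stability under Hausdorff convergence---is supplied unchanged by the results already established for general convex domains, which is why the proof is essentially identical to that of Theorem~\ref{T1}.
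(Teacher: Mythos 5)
Your proposal is correct and follows essentially the same route as the paper: bound the diameters via Lemma~\ref{L5}, normalise the symmetry axes to the $z$ axis, extract a Hausdorff limit by Blaschke's selection theorem, and use the isometry-invariance of $d_H$ together with uniqueness of Hausdorff limits to show the limit is again axially symmetric, concluding with Proposition~\ref{C.c}. The only cosmetic difference is that the paper normalises the axis at the outset ``without loss of generality'' while you spell out the alignment step explicitly.
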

\begin{remark}
This optimal domain does not need to minimize $\mu_1$ among all other convex domains of the same volume. Conversely the optimal domain in the first part of Theorem~\ref{T1} does not need to be axially symmetric. Note also that in the axially symmetric setting there is no analogue of the regularity result proved in Proposition~\ref{P.regularity} below because the class of flows $\Phi^t$ that preserve axial symmetry is too small.
\end{remark}

\begin{proof}
Let $\Om_n$ be a minimizing sequence in the class of bounded axially symmetric convex domains of volume $V$. Lemma~\ref{L5} implies that we can take all $\Om_n$ to be contained in some ball $B_R$ for large enough radius $R>0$. It follows from Blaschke's selection theorem that, up to a subsequence, the domains $\Om_n$ converge to a bounded convex domain $\Om$ of volume $V$ in the Hausdorff distance $d_H$. By Proposition~\ref{C.c} we know that $$\mu_1(\Om)=\lim_{n\rightarrow\infty}\mu_1(\Om_n)\,.$$
Now, for each $\phi\in[0,2\pi)$, let $R_\phi$ be the rotation of $\RR^3$ defined as
\[
(x,y,z)\mapsto (\cos\phi\, x + \sin\phi\, y,-\sin\phi\, x+\cos\phi\, y,z)\,.
\]
By assumption, we know that $R_{\phi}(\Om_n)=\Om_n$ for all $\phi$ and $n$. Accordingly, since the Hausdorff distance is preserved by isometries, we can write
\[
d_H(\Om_n,R_\phi(\Om))=d_H(R_\phi(\Om_n),R_\phi(\Om))=d_H(\Om_n,\Om)\to 0
\]
as $n\to\infty$, so we conclude that $\Om_n$ converges to $R_\phi(\Om)$ for all $\phi\in[0,2\pi)$. The uniqueness of the Hausdorff limit implies that $R_\phi(\Om)=\Om$ and hence $\Om$ is axially symmetric, as we wanted to show.
	\end{proof}

\section{Non-analyticity of optimal convex domains}\label{analytic}

In this section we prove the second part of Theorem~\ref{T1}, i.e., we show that the optimal convex domains of volume $V>0$ are not analytic. Key to prove this result is a lemma showing that the pointwise norm of any eigenfunction of curl associated to the first eigenvalue $\mu_1(\Om)$ is constant on the stably convex part of the boundary $\pd\Om$, see Lemma~\ref{L.norm} below.

\begin{proposition}\label{P.regularity}
Let $\Om$ be an optimal convex domain of volume $V$. Then its boundary $\pd\Om$ is not analytic.
\end{proposition}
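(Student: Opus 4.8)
The plan is to argue by contradiction: suppose the optimal convex domain $\Om$ has analytic boundary $\pd\Om$. Since $\Om$ is convex with analytic boundary, it is in particular $C^2$, so by Proposition~\ref{L.stable} it is stable if and only if it is strongly convex, which for $C^2$ domains means the Gauss curvature of $\pd\Om$ is nonnegative everywhere and the stably convex part of $\pd\Om$ is the (open, full-measure by analyticity) set where the curvature is positive. The key input advertised in the section introduction is Lemma~\ref{L.norm}, which states that $|u_1|\restr$ is constant on the stably convex part of $\pd\Om$, where $u_1$ is a first curl eigenfield. Because $\pd\Om$ is analytic and the Gauss curvature is an analytic function that is nonnegative and positive on a dense open set, the stably convex part is dense in $\pd\Om$; combining this with the continuity of $|u_1|\restr$ (which is legitimate since, by Lemma~\ref{L.rc}, $|u_1|^2\restr \in W^{1/3,3/2}(\pd\Om)$ and by Proposition~\ref{P.bc} $u_1$ is analytic up to suitable regularity) will force $|u_1|\restr$ to be a nonzero constant on all of $\pd\Om$.

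First I would make precise the optimality-driven identity underlying Lemma~\ref{L.norm}: the point is that $\Om$ minimizes $\mu_1$ under a volume constraint, so the first-order shape-derivative (Hadamard) condition must hold on the part of the boundary where convexity can be freely perturbed in both directions, namely the stably convex part. On that part one may deform $\Om$ by an arbitrary volume-preserving normal variation while staying within the convex class (this is exactly the content of the stability notion from Section~\ref{S.convpre}), and the vanishing of the first variation of $\mu_1$ yields that the boundary integrand $|u_1|^2\restr$ equals a Lagrange multiplier, hence is constant there. I would invoke Lemma~\ref{L.norm} for this step rather than rederive the shape derivative.

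Next, with $|u_1|\restr \equiv \cc_0$ constant on all of $\pd\Om$ (a nonzero constant, since $u_1\not\equiv 0$ and by unique continuation for the elliptic system $\Delta u_1 + \mu_1^2 u_1 = 0$ a nontrivial eigenfield cannot have identically vanishing trace), I would derive a contradiction with the constraints that $u_1$ must simultaneously satisfy on $\pd\Om$: it is divergence-free, tangent to the boundary ($N\cdot u_1\restr = 0$ by Proposition~\ref{P.bc}), and satisfies $\curl u_1 = \mu_1 u_1$ with $|u_1|^2\restr = |N\times u_1\restr|^2$ constant (cf.\ Remark~\ref{R.regularity}). A boundary field that is tangent, of constant nonzero length, and analytic defines a nonvanishing analytic tangent vector field on the analytic surface $\pd\Om$; but $\pd\Om$ is homeomorphic to the sphere $S^2$ (as $\Om$ is convex, cf.\ the introduction, $\Om$ is homeomorphic to a ball), which has Euler characteristic $2$, so by the Poincar\'e--Hopf theorem it admits no nowhere-vanishing continuous tangent vector field. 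This topological obstruction is the contradiction.

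The main obstacle I expect is the regularity and continuity bookkeeping needed to pass from ``constant on the stably convex part'' to ``constant on all of $\pd\Om$,'' and to ensure the trace $u_1\restr$ is genuinely a continuous (indeed analytic up to the boundary) tangent field rather than merely an $L^2$ or fractional-Sobolev object. Analyticity of $\pd\Om$ should upgrade the interior analyticity of $u_1$ (Proposition~\ref{P.bc}) to analyticity up to the boundary via analytic elliptic boundary regularity for the system $\Delta u_1 + \mu_1^2 u_1 = 0$ together with the boundary conditions, after which the trace is a genuine analytic tangent vector field and the Poincar\'e--Hopf argument applies cleanly. The delicate point is justifying this boundary analyticity rigorously given that the a priori regularity from Lemma~\ref{L.rc} is only $W^{1/3,3/2}$; I would bootstrap using the constancy of $|u_1|\restr$ and the overdetermined nature of the boundary system, which is precisely why analyticity of $\pd\Om$ (rather than mere smoothness) leads to the contradiction and hence cannot hold.
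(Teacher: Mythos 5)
Your proposal is correct and follows essentially the same route as the paper: contradiction via the dense stably convex part of an analytic boundary, the constancy of $|u_1|^2\restr$ from Lemma~\ref{L.norm}, propagation of that constancy to all of $\pd\Om$ by analyticity of the boundary trace, and the Poincar\'e--Hopf obstruction on a sphere. The only superfluous step is your appeal to unique continuation to show the constant is nonzero: Lemma~\ref{L.norm} gives its value explicitly as $\|u_1\|^2_{L^2(\Om)}/(3|\Om|)$, which is positive for a nontrivial eigenfield, and the paper handles the boundary analyticity of $u_1\restr$ (your flagged ``delicate point'') by citing~\cite[Theorem A.1]{ELP21}.
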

\begin{proof}
Let us assume that $\pd\Om$ is analytic. It is easy to check that there is an open and dense set $U\subset\pd\Om$ where $\pd\Om$ is stably convex (which corresponds to the piece of $\pd\Om$ whose Gauss curvature is positive). Now let $v$ be any compactly supported $C^\infty$ vector field on $\mathbb{R}^3$ which is zero on a neighborhood of $\partial\Om\setminus U$. The stable convexity then implies that, for small enough times $t$, its flow $\Phi^t$ preserves the convexity of $\Om$.

If $u_1$ is a curl eigenfield on $\Om$ associated to the first curl eigenvalue, it follows from Lemma~\ref{L.norm} below that
\begin{equation}\label{eq.a}
|u_1|^2=\frac{\|u_1\|^2_{L^2(\Om)}}{3|\Om|}
\end{equation}
on an open set of $\pd\Om$ strictly contained in $U$. In fact, the analyticity of the boundary implies that $u_1|_{\pd\Om}$ is real analytic~\cite[Theorem A.1]{ELP21}, and therefore we conclude that $|u_1|^2$ is constant everywhere on $\partial\Om$. Since $\pd\Om$ is diffeomorphic to $\mathbb S^2$ by convexity, the Poincar\'{e}-Hopf theorem implies that $u_1$ must vanish somewhere on $\partial\Om$ and hence $u_1=0$ on $\pd\Om$, and in turn $u_1\equiv 0$ on $\Om$ by Equation~\eqref{eq.a}. This contradiction completes the proof of the proposition.
\end{proof}

Before showing that the norm of the first eigenfield $|u_1|$ is constant on the stably convex part of the boundary of an optimal domain, we first prove an auxiliary result on the equivalence between our optimal domain problem and a minimization problem without a volume constraint:
\begin{lemma}\label{L.Equiv}
A bounded convex domain $\Om$ is optimal for the first curl eigenvalue if and only if $\Om$ minimizes the product $|\Om|^{1/3}\mu_1(\Om)$ among all bounded convex domains (with not necessarily the same volume).
\end{lemma}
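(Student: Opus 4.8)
The plan is to reduce the claimed equivalence to the scale invariance of the functional $\Om\mapsto|\Om|^{1/3}\mu_1(\Om)$, which in turn is a direct consequence of the scaling law already recorded in Lemma~\ref{L.Scaling}. The first step is to observe that for any $\la>0$ the dilation $\la\Om$ is again a bounded convex domain with $|\la\Om|=\la^3|\Om|$, and by Lemma~\ref{L.Scaling} one has $\mu_1(\la\Om)=\mu_1(\Om)/\la$. Combining these two facts gives
\[
|\la\Om|^{1/3}\mu_1(\la\Om)=(\la^3|\Om|)^{1/3}\cdot\frac{\mu_1(\Om)}{\la}=|\Om|^{1/3}\mu_1(\Om)\,,
\]
so the product is unchanged under rescaling. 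This is the only ingredient needed, and it is immediate; there is no genuine obstacle in the argument.

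Granting scale invariance, I would prove the two implications separately. For the forward direction, suppose $\Om$ is optimal for its own volume $V:=|\Om|$, and let $\Om'$ be an arbitrary bounded convex domain. I would rescale $\Om'$ to match the volume of $\Om$ by choosing $\la:=(V/|\Om'|)^{1/3}$, so that $\la\Om'$ is convex with $|\la\Om'|=V$. Optimality of $\Om$ then gives $\mu_1(\Om)\leq\mu_1(\la\Om')$, and multiplying through by $V^{1/3}$ and invoking scale invariance on the right-hand side yields
\[
|\Om|^{1/3}\mu_1(\Om)=V^{1/3}\mu_1(\Om)\leq V^{1/3}\mu_1(\la\Om')=|\la\Om'|^{1/3}\mu_1(\la\Om')=|\Om'|^{1/3}\mu_1(\Om')\,.
\]
Since $\Om'$ was arbitrary, $\Om$ minimizes the product.

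For the converse, suppose $\Om$ minimizes $|\cdot|^{1/3}\mu_1(\cdot)$ over all bounded convex domains. Given any convex $\Om'$ with the \emph{same} volume $|\Om'|=|\Om|=:V$, the minimization property applied to $\Om'$ reads $V^{1/3}\mu_1(\Om)\leq V^{1/3}\mu_1(\Om')$, whence $\mu_1(\Om)\leq\mu_1(\Om')$; thus $\Om$ is optimal in the sense of the definition. The only points requiring care are purely formal: that dilations preserve boundedness and convexity, so that $\la\Om'$ is an admissible competitor, and that the existence of a volume-$V$ competitor is automatic since $\Om$ itself qualifies. I expect no substantive difficulty.
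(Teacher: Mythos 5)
Your proof is correct and follows essentially the same route as the paper: both directions rest on the rescaling $\la=(|\Om|/|\Om'|)^{1/3}$ together with the scaling law $\mu_1(\la\Om)=\mu_1(\Om)/\la$ from Lemma~\ref{L.Scaling}, which is exactly how the paper argues. Your explicit observation that the product $|\Om|^{1/3}\mu_1(\Om)$ is scale invariant is just a repackaging of the same computation.
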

\begin{proof}
If $\Om$ minimizes $|\Om|^{1/3}\mu_1(\Om)$ among all bounded convex domains, then for any bounded convex domain $\Om^\prime$ with $|\Om^\prime|=|\Om|$ we find
	\[
	|\Om|^{1/3}\mu_1(\Om)\leq |\Om^\prime|^{1/3}\mu_1(\Om^\prime)=|\Om|^{1/3}\mu_1(\Om^\prime)
	\]
and hence $\Om$ is an optimal domain for the first curl eigenvalue. On the other hand, if $\Om$ is a minimizer of the first curl eigenvalue among all bounded convex domains with the same volume, and $\Om^\prime$ is any other bounded convex domain (of possibly different volume), we notice that the scaling $\la \Om^\prime$ with $\lambda:=\Big(\frac{|\Om|}{|\Om^\prime|}\Big)^{1/3}$ satisfies $|\lambda\Om^\prime|=|\Om|$. Consequently, $\mu_1(\Om)\leq \mu_1(\lambda \Om^\prime)$ and the lemma follows from Lemma~\ref{L.Scaling}.
	\end{proof}

We are now ready to prove the key lemma that is used in the proof of Proposition~\ref{P.regularity}. In the statement we use the notation $\Gamma_S$ for the stable part of the boundary of $\Om$, which is an open set. By stable set we mean that the domain $\Phi^t(\Om)$ is convex for all $|t|$ small enough if $\Phi^t$ is the flow defined by a vector field whose support on $\pd\Om$ is contained in $\Gamma_S$. We recall that the trace $|u|^2\restr$ is in the space $W^{\frac13,\frac32}(\pd\Om)$ by Lemma~\ref{L.rc}.

\begin{lemma}\label{L.norm}
Let $\Omega$ be an optimal convex domain and let $v$ be a smooth compactly supported vector field on $\mathbb{R}^3$ whose flow $\Phi^t$ preserves the convexity of $\Omega$ for small enough times. Then, for any first eigenfield $u_1$, we have
	\begin{gather}
		\label{E.norm}
	\int_{\partial\Omega}\left(|u_1|^2-\frac{\|u_1\|^2_{L^2(\Omega)}}{3|\Omega|}\right)v\cdot N\ dS=0\,.
	\end{gather}
In particular $|u_1|^2=\frac{\|u_1\|^2_{L^2}}{3|\Omega|}$ a.e. on $\Gamma_S$, and if $\Omega$ is stably convex then $|u_1|^2=\frac{\|u_1\|^2_{L^2}}{3|\Omega|}$ a.e. on the whole $\partial\Omega$.
\end{lemma}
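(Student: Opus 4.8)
The plan is to combine the scale-invariant reformulation of optimality in Lemma~\ref{L.Equiv} with a shape-deformation argument in which $u_1$ is not differentiated as an eigenfield (which would be delicate, since $\mu_1$ may be degenerate) but instead transported to the deformed domains as an explicit \emph{test field}. Write $\Om_t:=\Phi^t(\Om)$, which is convex for $|t|\ll1$ by hypothesis, and normalize $\|u_1\|_{L^2(\Om)}=1$. By Lemma~\ref{L.Equiv}, $t=0$ minimizes $t\mapsto|\Om_t|^{1/3}\mu_1(\Om_t)$, so the whole problem reduces to extracting a first-order condition from this minimality.

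To bound $\mu_1(\Om_t)$ from above I would transport $u_1$ by the contravariant Piola transform
\[
w_t(\Phi^t(y)):=\frac{1}{\det D\Phi^t(y)}\,D\Phi^t(y)\,u_1(y),\qquad y\in\Om,
\]
i.e.\ the pushforward under $\Phi^t$ of the flux $2$-form associated with $u_1$. This operation preserves distributional divergence-freeness and the tangency $w_t\cdot N\restr=0$ on $\pd\Om_t$; since $\Om_t$ is convex (hence simply connected, with $\pd\Om_t$ of genus $0$), $w_t$ is then $L^2$-orthogonal to all curl-free fields, i.e.\ $w_t\in\cK(\Om_t)$. Because the helicity is invariant under orientation-preserving diffeomorphisms for fields tangent to the boundary, $\cH(w_t)=\cH(u_1)=\mu_1^{-1}>0$. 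The variational characterization used in Lemma~\ref{L.MP} then gives $\mu_1(\Om_t)\le\|w_t\|_{L^2(\Om_t)}^2/\cH(w_t)=\mu_1\,\|w_t\|_{L^2(\Om_t)}^2$, and together with optimality this yields $g(t):=|\Om_t|^{1/3}\|w_t\|_{L^2(\Om_t)}^2\ge|\Om|^{1/3}=g(0)$ for all small $t$ of either sign. As $g$ is smooth in $t$, I conclude $g'(0)=0$.

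The remaining step is the explicit evaluation of $g'(0)$. Changing variables gives $\|w_t\|_{L^2(\Om_t)}^2=\int_\Om(\det D\Phi^t)^{-1}\,|D\Phi^t u_1|^2\,dx$; differentiating at $t=0$ (with $D\Phi^0=I$, $\tfrac{d}{dt}\big|_0 D\Phi^t=Dv$ and $\tfrac{d}{dt}\big|_0\det D\Phi^t=\Div v$) produces $\int_\Om\big(2\,u_1\cdot(Dv\,u_1)-(\Div v)\,|u_1|^2\big)\,dx$. Using $\Div u_1=0$, $u_1\cdot N\restr=0$, and the Bernoulli identity $\nabla_{u_1}u_1=\tfrac12\nabla|u_1|^2$ from Lemma~\ref{L.rc}, two integrations by parts (legitimate since $|u_1|^2\in W^{1,3/2}(\Om)$ with trace in $W^{1/3,3/2}(\pd\Om)$) collapse the volume terms, leaving $\tfrac{d}{dt}\big|_0\|w_t\|_{L^2(\Om_t)}^2=-\int_{\pd\Om}|u_1|^2\,(v\cdot N)\,dS$. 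Combining this with $\tfrac{d}{dt}\big|_0|\Om_t|=\int_{\pd\Om}v\cdot N\,dS$ in the identity $g'(0)=0$ yields exactly \eqref{E.norm} (for general $u_1$ one rescales, restoring the factor $\|u_1\|_{L^2}^2$). The pointwise statement then follows from the fundamental lemma of the calculus of variations: about any point of $\Gamma_S$ one may choose $v$ supported nearby with $v\cdot N$ an arbitrary test function, so $|u_1|^2$ equals the stated constant a.e.\ on $\Gamma_S$, and on all of $\pd\Om$ when $\Om$ is stably convex.

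The main obstacle is the correct construction and analysis of the transported field $w_t$: one must use the $2$-form (Piola) transport rather than a naive pushforward so that divergence-freeness and tangency are preserved, and one must invoke both the genus-$0$ property of convex boundaries (to discard harmonic fields) and the diffeomorphism invariance of helicity in order to guarantee $w_t\in\cK(\Om_t)$ with $\cH(w_t)=\cH(u_1)$. This is precisely what lets the argument bypass the possible higher multiplicity of $\mu_1$, which would otherwise obstruct differentiating $\mu_1(\Om_t)$ directly. A secondary technical point is justifying the two integrations by parts at the low regularity available for a merely convex (Lipschitz) domain, for which the trace bound $|u_1|^2\restr\in W^{1/3,3/2}(\pd\Om)$ of Lemma~\ref{L.rc} and the tangency $u_1\cdot N\restr=0$ are exactly what is needed.
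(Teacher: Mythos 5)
Your proposal is correct and follows essentially the same route as the paper: the transported field $w_t$ is exactly the paper's test field $u_1^t=(\Phi^t)_*u_1/(\det D\Phi^t\circ\Phi^{-t})$, and you use the same ingredients (membership in $\cK(\Om^t)$ via genus zero and the Hodge decomposition, invariance of helicity, Lemma~\ref{L.Equiv}, and the first-order condition $g'(0)=0$). The only cosmetic difference is that you differentiate the change-of-variables formula directly and invoke the Bernoulli identity $\nabla_{u_1}u_1=\tfrac12\nabla|u_1|^2$, whereas the paper computes $\tfrac{d}{dt}u_1^t|_{t=0}=\curl(v\times u_1)$ and integrates by parts against the eigenvalue equation; these are equivalent computations.
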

\begin{proof}
For small $|t|$ we define the domain $\Om^t:=\Phi^t(\Om)$ and the vector field
\[
u^t_1:=\frac{(\Phi^t)_{*}u_1}{\det\left(D\Phi^t\right)\circ\Phi^{-t}}\,,
\]
on $\Om^t$, which is not generally a first eigenfield for $t\neq 0$; obviously $u^t_1\in H^1(\Om^t)$ because $u_1\in H^1(\Om)$ by Lemma~\ref{L.rc}. We claim that $u^t_1\in \mathcal{K}(\Omega^t)$. Indeed, we know from Proposition~\ref{P.bc} that $N\cdot u_1\restr=0$, and since $u^t_1$ is the push-forward of $u_1$ up to a proportionality factor, we easily infer that $N_t\cdot u^t_1\restrt=0$, where $N_t$ is a unit normal vector at almost every point of $\pd\Om^t$ ($u^t_1$ understood as an element in $H^{1/2}(\pd\Om^t)$). The fact that $\Div u^t_1=0$ on $\Om^t$ follows from the following computation using the Lie derivative, with $\mu$ the Euclidean volume form:
\begin{align*}
(\Phi^t)^* (L_{u_1^t}\mu) = L_{(\Phi^t)^*u_1^t}(\Phi^t)^*\mu=L_{u_1}\mu=0
\end{align*}
where we have used that $(\Phi^t)^*\mu = \det\left(D\Phi^t\right) \mu $ and $(\Phi^t)^*u_1^t=\frac{u_1}{\det\left(D\Phi^t\right)}$. Since $\Om^t$ is homeomorphic to a ball, the space of harmonic fields is trivial, i.e., $\cH_{\Om^t}=0$, and hence the Hodge decomposition theorem for Lipschitz domains, cf.~\cite[Proposition~11.3]{MMT}, implies that any curl-free vector field $w\in L^2(\Om^t)$ is of the form $w=\nabla \psi$ for some $\psi\in H^1(\Om^t)$. Accordingly,
\[
\int_{\Om^t}u^t_1\cdot \nabla \psi dx=\int_{\pd\Om^t}\psi N_t \cdot u_1^t\restrt dS=0\,,
\]
thus completing the proof of the claim.

Now we show that the helicity $\mathcal H_{\Om^t}(u_1^t)=\int_{\Om^t} \curl^{-1} u_1^t\cdot u_1^t dx$, where $\curl^{-1}$ is the compact inverse of $\curl$ on $\Om^t$, does not depend on $t$, i.e.,
\[
\mathcal H_{\Om^t}(u_1^t)=\cH(u_1)\,.
\]
Indeed, if $\alpha^t$ is the metric dual $1$-form of $\curl^{-1} u_1^t$, we know by definition of $\curl$ in the language of forms that $i_{u_1^t}\mu=d\alpha^t$;
a straightforward computation using the definition of $u_1^t$ then implies that $\alpha^t=(\Phi^t)_*\alpha_1+d\psi_t$ for some function $\psi_t\in H^1(\Om^t)$. Accordingly
\begin{align*}
\mathcal H_{\Om^t}(u_1^t)&=\int_{\Om^t}\alpha^t\wedge d\alpha^t=\int_{\Om}\alpha_1\wedge d\alpha_1+\int_{\Om^t}d\psi_t\wedge d\alpha^t\\
&=\cH(u_1)+\int_{\Om^t} u_1^t\cdot \nabla \psi_t dx=\cH(u_1)\,.
\end{align*}

By Lemma~\ref{L.Equiv} and the assumption that $\Om^t$ is convex for all small enough $|t|$, the function $|\Omega^t|^{1/3}\mu_1(\Omega^t)$ must have a local minimum at $t=0$. We can now define the smooth function
\[
f(t):=|\Omega^t|^{1/3}\frac{\|u^t_1\|^2_{L^2(\Omega^t)}}{\mathcal{H}(u_1)}=|\Omega^t|^{1/3}\frac{\|u^t_1\|^2_{L^2(\Omega^t)}}{\mathcal{H}_{\Omega^t}(u^t_1)} \geq |\Omega^t|^{1/3} \mu_1(\Om^t)\,,
\]
where we have used the variational principle~\eqref{E.Var} in the last inequality. Therefore
\[
f(0)=|\Om|^{1/3}\mu_1(\Om)\leq |\Om^t|^{1/3}\mu_1(\Om^t)\leq f(t)
\]
for all $|t|$ small enough, and hence $f$ attains a local minimum at $t=0$ as well, in particular, $f'(0)=0$.

Noticing that
$$\frac{d |\Om^t|}{dt}\Big|_{t=0}=\int_\Om\Div v \,dx=\int_{\partial\Om}N\cdot v\,dS\,,$$
we can write
\[
f'(0)=\frac{|\Om|^{-2/3}\|u_1\|^2_{L^2(\Om)}}{3\cH(u_1)} \int_{\pd\Om}N\cdot v\, dS+\frac{|\Om|^{1/3}}{\cH(u_1)}\,\frac{d}{dt}\|u_1^t\|^2_{L^2(\Om^t)}\Big|_{t=0}\,.
\]
On the other hand, it is easy to check from the definition of $u_1^t$ that
\[
\frac{d u_1^t}{dt}\Big|_{t=0}=[u_1,v]-(\Div v) u_1=\curl(v\times u_1)\,,
\]
where $[\cdot,\cdot]$ denotes the Lie bracket of vector fields and to pass to the last equality we have used the well known identity for $\curl(v\times u_1)$ in terms of the Lie bracket. This allows us to compute the $t$ derivative of $\|u_1^t\|^2_{L^2(\Om^t)}$:
\begin{align*}
\frac{d}{dt}\|u_1^t\|^2_{L^2(\Om^t)}\Big|_{t=0}&=2\int_{\Om}u_1\cdot\curl(v\times u_1)dx+\int_{\pd\Om}|u_1|^2N\cdot v dS\\
&=-\int_{\pd\Om}|u_1|^2N\cdot v dS\,.
\end{align*}
To obtain the last equality we have integrated by parts and used that $\curl u_1=\mu_1 u_1$. We recall that the boundary terms are well defined because $|u_1|^2\restr\in W^{\frac13,\frac32}(\pd\Om)$ on account of Lemma~\ref{L.rc}.

Equation~\eqref{E.norm} then follows after plugging this expression into the formula for $f'(0)=0$. If $\Gamma_S$ is the stable part of $\pd\Om$ it is clear that the convexity of $\Om$ is preserved by the local flow of any vector field whose support on $\pd\Om$ is contained in $\Gamma_S$. We then easily infer that $|u_1|^2\restr-\frac{\|u_1\|^2_{L^2(\Omega)}}{3|\Omega|}$ is annihilated by all smooth functions supported on $\Gamma_S$ and hence it must be zero a.e. on $\Gamma_S$. In the case that $\Om$ is stably convex, by definition the domain is stable under any local flow of a vector field, so the same argument as before yields that $|u_1|^2\restr=\frac{\|u_1\|^2_{L^2(\Omega)}}{3|\Omega|}$ a.e. on $\pd\Om$, thus completing the proof of the lemma.
\end{proof}


We finish this section showing that if an optimal convex domain is regular enough, it cannot be stably convex. In the statement we use the notion of domains in a Sobolev class $W^{2,p}$ as introduced in~\cite{MP14}. This class is natural in the sense that $W^{2,\infty}=C^{1,1}$, and it is proved~\cite{Amrou} that any curl eigenfield on a $C^{1,1}$ domain is continuous up to the boundary. Working with domains of class $W^{2,p}$, which are $C^1$ provided that $p>3$, is a natural setting to improve the $C^{1,1}$ regularity assumption. In particular, we do not know if the following holds for arbitrary $C^1$ domains:

\begin{proposition}\label{MT3}
Let $\Omega\subset \mathbb{R}^3$ be an optimal convex domain in the Sobolev class $W^{2,p}$ for some $p>3$. Then $\Omega$ is not stably convex. In particular, if $\Om$ is of class $C^{1,1}$, it is not strongly convex.
\end{proposition}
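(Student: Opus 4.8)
The plan is to argue by contradiction: assume $\Omega$ is an optimal convex domain of class $W^{2,p}$ with $p>3$ that is stably convex, and derive a contradiction along the same lines as the proof of Proposition~\ref{P.regularity}. Since $\Omega$ is stably convex, Lemma~\ref{L.norm} applies on the whole boundary, giving
\[
|u_1|^2\restr=\frac{\|u_1\|^2_{L^2(\Omega)}}{3|\Omega|}
\]
a.e.\ on $\pd\Omega$ for any first eigenfield $u_1$. The key point is that the regularity hypothesis $p>3$ forces $\Omega$ to be $C^1$ (by the Sobolev embedding for domains of class $W^{2,p}$), and moreover, by the result of~\cite{Amrou}, every curl eigenfield on a $C^{1,1}$ domain — and more generally the setting here — is continuous up to the boundary. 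Hence $|u_1|^2\restr$ is genuinely continuous on $\pd\Omega$ and equals the constant above \emph{everywhere}, not merely almost everywhere.

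Next I would invoke the topological obstruction. By convexity $\pd\Omega$ is homeomorphic (indeed $C^1$-diffeomorphic) to $\mathbb S^2$. Since $N\cdot u_1\restr=0$ by Proposition~\ref{P.bc}, the trace $u_1\restr$ is everywhere tangent to $\pd\Omega$, so it defines a continuous tangent vector field on a surface of Euler characteristic $2$. The Poincar\'e--Hopf theorem then forces $u_1\restr$ to vanish at some point of $\pd\Omega$. Combined with the constancy of $|u_1|^2\restr$, this means the constant itself is zero, so $u_1\restr\equiv0$ and consequently $\|u_1\|^2_{L^2(\Omega)}=0$, i.e.\ $u_1\equiv0$ on $\Omega$. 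This contradicts $u_1$ being a (nonzero) first eigenfield, establishing that $\Omega$ cannot be stably convex. The final sentence of the statement then follows immediately from Proposition~\ref{L.stable}, which says a $C^{1,1}$ strongly convex domain is stable; so if $\Omega$ were both $C^{1,1}$ and strongly convex it would be stably convex, contradicting what we just proved.

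I expect the main obstacle to be the continuity of the trace $u_1\restr$ up to the boundary, which is exactly what upgrades the a.e.\ identity from Lemma~\ref{L.norm} into a pointwise statement that can feed into Poincar\'e--Hopf. For analytic boundaries this was handled in Proposition~\ref{P.regularity} via the boundary analyticity result of~\cite{ELP21}, but in the merely $W^{2,p}$ (or $C^{1,1}$) setting one must instead rely on the continuity-up-to-the-boundary statement of~\cite{Amrou}; verifying that this result indeed applies in the present functional-analytic framework (in particular that the eigenfield, a priori only in $H^1(\Omega)$ by Lemma~\ref{L.rc}, is continuous on $\overline\Omega$) is the delicate step. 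Once continuity is in hand, the Poincar\'e--Hopf argument and the reduction to Proposition~\ref{L.stable} are routine.
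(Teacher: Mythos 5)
Your proposal is correct and follows essentially the same route as the paper: stable convexity plus Lemma~\ref{L.norm} gives $|u_1|^2\restr$ equal to the positive constant $\|u_1\|_{L^2(\Om)}^2/(3|\Om|)$ a.e., boundary continuity of $u_1$ upgrades this to everywhere, and the Poincar\'e--Hopf obstruction on $\pd\Om\cong\mathbb S^2$ yields the contradiction, with the $C^{1,1}$ case reduced via Proposition~\ref{L.stable}. The one step you flag as delicate is resolved in the paper not through~\cite{Amrou} (which covers only $C^{1,1}=W^{2,\infty}$) but through~\cite{MP14}: for a $W^{2,p}$ domain with $p>3$ one has $\pd\Om\in W^{2-1/q,q}$ with $q=\min\{p,4\}>3$, whence $u_1\in W^{1,q}(\overline\Om)\subset C^{0,\alpha}(\overline\Om)$ and the trace is continuous.
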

\begin{proof}
Assume that $\Om$ is stably convex and $u_1$ is a first curl eigenfield. We recall that a $C^{1,1}$ strongly convex domain is also stably convex, cf. Proposition~\ref{L.stable}. According to Lemma~\ref{L.norm}, $|u_1|^2=c$ a.e. on $\pd\Om$ for some constant $c>0$. We claim that $u_1\restr\in C^{0,\alpha}(\pd\Om)$ for some $\alpha>0$. Indeed, by Sobolev embedding we know that $\pd\Om$ is in the Sobolev class $W^{2-\frac1q,q}$, where $q:=\text{min}\{p,4\}$. Now, since $q>3$, it follows from~\cite{MP14} that $u_1$ is in $W^{1,q}(\overline{\Om})\subset C^{0,\alpha}(\overline\Om)$ for some $\alpha>0$, and so the boundary restriction $u_1|_{\pd\Om}$ is continuous. Since $\pd\Om$ is homeomorphic to a sphere, any continuous vector field must vanish at some point, which is a contradiction with the fact that $|u_1|^2=c>0$ everywhere. The proposition then follows.
\end{proof}


\section{Existence of uniform H\"{o}lder optimal domains}\label{SS.Holder}

Our goal in this section is to prove the existence of optimal domains within the class of bounded domains, of fixed volume and contained in a fixed bounded domain $D\subset\RR^3$, which are uniformly H\"older in the following sense.

\begin{definition} \label{D.UniHoel}
Given any constants $R>0$, $0<c_0<L$, $c_1>0$, $0<\alpha\leq 1$ and an integer $k\geq 2$, we define the set of uniform H\"{o}lder functions as
	\begin{align*}
C^{k,\alpha}(L,c_0,c_1):=&\{f\in C^{k,\alpha}(\overline{B_R}): \|f\|_{C^{k,\alpha}(\overline{B_R})}\leq L,\text{ } f^{-1}(0)\cap B_R \text{ is nonempty}
\\&\text{ and } |\nabla f|\geq c_0\text{ on } f^{-1}(t) \text{ for all }t\in(-c_1,c_1)\}\,.
\end{align*}
The class of uniformly H\"{o}lder domains $\mathcal{D}^{k,\alpha}(L,c_0,c_1)$ consists of domains $\Om\subset D$ ($D$ is fixed for all domains, and we take $R$ large enough so that $\overline{D}\subset B_R$), with a defining function $f\in C^{k,\alpha}(L,c_0,c_1)$. Therefore, $\Om=\{x\in B_R: f(x)<0\}$ and $B_R\backslash\overline{\Om}=\{x\in B_R: f(x)>0\}$, so that $\pd\Om=f^{-1}(0)\cap B_R=f^{-1}(0)\cap \overline{D}$.
\end{definition}

\begin{remark}
Any $C^{k,\alpha}$ bounded domain $\Om\subset D$ is in $\mathcal{D}^{k,\alpha}(L,c_0,c_1)$ if the constants $L,c_0^{-1},c_1^{-1}$ are large enough. We also observe that the principal curvatures of the boundary of any domain in $\mathcal{D}^{k,\alpha}(L,c_0,c_1)$ are uniformly bounded, so we are considering domains of uniformly bounded geometry.
\end{remark}

We recall that, for a certain~$V\in(0,|D|)$, we say that the constants are {\em admissible} when there are domains in $\mathcal{D}^{k,\alpha}(L,c_0,c_1)$ of volume~$V$.

\begin{theorem} \label{T.UniHoel}
Fix $k\geq2$, $\al>0$, $V\in(0,|D|)$ and admissible  constants $L$, $c_0$, $c_1$ as in Definition~\ref{D.UniHoel}. There exists a domain $\Omega$ of volume $V$ within the class $\mathcal{D}^{k,\alpha}(L,c_0,c_1)$ that is optimal for the first positive curl eigenvalue, that is, such that
	\[
	\mu_1(\Omega)=\inf_{\widetilde{\Omega}\in \mathcal{D}^{k,\alpha}(L,c_0,c_1)\text{, }|\widetilde{\Omega}|=V}\mu_1(\widetilde{\Omega})\,.
	\]
\end{theorem}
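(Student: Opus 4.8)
The plan is to prove existence of an optimal domain within the uniformly H\"older class $\mathcal{D}^{k,\alpha}(L,c_0,c_1)$ by the direct method of the calculus of variations, following the same three-ingredient template as in the proof of Theorem~\ref{T1} but exploiting the fact that all competitors are now trapped inside the fixed bounded domain $D\subset\overline{B_R}$. First I would take a minimizing sequence $\{\Om_n\}\subset\mathcal{D}^{k,\alpha}(L,c_0,c_1)$ with $|\Om_n|=V$ and $\mu_1(\Om_n)\to \inf\mu_1$; since these domains are not convex, Blaschke's theorem is unavailable, and instead the compactness must come from the uniform bound $\|f_n\|_{C^{k,\alpha}(\overline{B_R})}\leq L$ on the defining functions. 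By the Arzel\`a--Ascoli theorem applied to the $(k,\alpha)$-H\"older-bounded sequence $\{f_n\}$, a subsequence converges in $C^{k,\beta}(\overline{B_R})$ for any $\beta<\alpha$ (and in particular in $C^k$) to a limit $f\in C^{k,\alpha}(\overline{B_R})$ with $\|f\|_{C^{k,\alpha}}\leq L$. I would let $\Om:=\{x\in B_R: f(x)<0\}$ be the candidate optimal domain.

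The next, and most delicate, step is to verify that the limit domain $\Om$ actually belongs to the class $\mathcal{D}^{k,\alpha}(L,c_0,c_1)$ and has the correct volume. The uniform nondegeneracy condition $|\nabla f_n|\geq c_0$ on the level sets $f_n^{-1}(t)$ for $t\in(-c_1,c_1)$ passes to the limit thanks to $C^1$ convergence of the gradients, so that $|\nabla f|\geq c_0$ on $f^{-1}(t)$ for the same range of $t$; this guarantees that $\pd\Om=f^{-1}(0)\cap\overline{D}$ is a genuine $C^{k,\alpha}$ hypersurface (nonempty by the admissibility hypothesis) and that the boundaries $\pd\Om_n$ converge to $\pd\Om$ in $C^k$. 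The volume constraint $|\Om|=V$ then follows because the nondegeneracy $|\nabla f|\geq c_0$ near the zero set forbids the limit from developing a positive-measure set where $f=0$, so $|\{f_n<0\}|\to|\{f<0\}|$ by dominated convergence once one knows the characteristic functions converge a.e.

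Finally I would establish the continuity $\mu_1(\Om_n)\to\mu_1(\Om)$ along this $C^k$-convergent sequence, which then forces $\mu_1(\Om)=\inf\mu_1$ and completes the proof. The cleanest route is to reduce to the convergence already available: since $\pd\Om_n\to\pd\Om$ in $C^k$ with uniform lower gradient bounds, standard results produce, for each $n$, a diffeomorphism $\Psi_n$ of $\overline{B_R}$ with $\Psi_n\to\id$ in $C^k$ and $\Psi_n(\Om)=\Om_n$. Pulling back first eigenfields along $\Psi_n$ in exactly the manner used in the proof of Lemma~\ref{L.norm} (the push-forward $u_1^t$ construction, which preserves membership in $\cK$ and the helicity) and invoking the variational characterization~\eqref{E.Var} gives two-sided comparison estimates $|\mu_1(\Om_n)-\mu_1(\Om)|\to 0$. \textbf{The main obstacle} is precisely this continuity step: one must show that the Rayleigh-type quotients $\|\cdot\|_{L^2}^2/\mathcal{H}(\cdot)$ converge uniformly under the diffeomorphisms $\Psi_n$, which requires controlling the $L^2$ norms and the helicities (hence the Biot--Savart / $\curl^{-1}$ operator) under the near-identity change of variables, using the uniform $H^1$ bounds on eigenfields guaranteed by the uniformly bounded geometry of the class together with the regularity theory of~\cite{MP14,Amrou}. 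Once the comparison maps $\Psi_n$ are in hand, the remaining estimates are routine perturbation bounds analogous to those in Proposition~\ref{L.stable}.
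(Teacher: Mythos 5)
Your overall architecture --- direct method, Arzel\`a--Ascoli compactness on the uniformly bounded defining functions, identification of the limit domain, and continuity of $\mu_1$ along the subsequence --- is the same as the paper's, and your first two steps essentially reproduce Lemma~\ref{L.HC}. (The paper obtains the volume convergence more cleanly by invoking Thom's isotopy theorem to produce diffeomorphisms $\Phi_n\to\id$ in $C^{k-1}$ with $\Phi_n(\Om_n)=\Om$, rather than arguing via a.e.\ convergence of characteristic functions, but both routes are viable.)

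The genuine problem is the continuity step, which you correctly single out as the main obstacle but then leave as ``routine perturbation bounds''. As you set it up, it is not routine, for two concrete reasons. First, the push-forward and helicity-invariance computation of Lemma~\ref{L.norm} that you propose to recycle uses in an essential way that the deformed domain is homeomorphic to a ball, so that the space of harmonic fields is trivial and every curl-free field is a gradient; domains in $\mathcal{D}^{k,\alpha}(L,c_0,c_1)$ may have boundaries of arbitrary genus, so you would additionally need to control the component of the pushed-forward field along the harmonic fields of $\Om_n$, which do not transform simply under $\Psi_n$. Second, even granting that, you would need uniform two-sided control of $\curl^{-1}$ (equivalently, of the Biot--Savart operator restricted to $\cK(\Om_n)$) under the near-identity change of variables --- precisely the kind of estimate the paper is careful to avoid. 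The paper's route makes the obstacle disappear: from the diffeomorphisms $\Phi_n\to\id$ one extracts sequences $t_n,s_n\to1$ with $t_n\Om_n\subseteq\Om\subseteq s_n\Om_n$, and then the monotonicity principle (Lemma~\ref{L.MP}) together with the scaling property (Lemma~\ref{L.Scaling}), both already established for arbitrary bounded Lipschitz domains, yield
\[
\frac{\mu_1(\Om_n)}{s_n}\leq \mu_1(\Om)\leq \frac{\mu_1(\Om_n)}{t_n}\,,
\]
hence $\mu_1(\Om_n)\to\mu_1(\Om)$ with no perturbation theory at all. You should replace your Rayleigh-quotient comparison with this sandwich argument (i.e., rerun the proof of Proposition~\ref{C.c}); as written, your continuity step is incomplete.
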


To prove this theorem we first establish the following compactness lemma. We recall that $d_H$ denotes the Hausdorff distance between compact sets and, as usual~\cite{Hen06}, we define $d_H(\Om_1,\Om_2):=d_H(\overline{B_R}\backslash\Om_1,\overline{B_R}\backslash\Om_2)$ for any two domains in $\mathcal{D}^{k,\alpha}(L,c_0,c_1)$.

\begin{lemma}\label{L.HC}
The space $\left(\mathcal{D}^{k,\alpha}(L,c_0,c_1),d_H\right)$ is a compact metric space. Moreover, the function $$\operatorname{Vol}:\left(\mathcal{D}^{k,\alpha}(L,c_0,c_1),d_H\right)\rightarrow \mathbb{R}$$
that assigns to a domain $\Omega$ its volume, $\Vol(\Om):=|\Om|$, is continuous.
\end{lemma}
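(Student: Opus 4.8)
The plan is to prove the two assertions of Lemma~\ref{L.HC} separately: first the compactness of $\left(\mathcal{D}^{k,\alpha}(L,c_0,c_1),d_H\right)$ as a metric space, and then the continuity of the volume functional. The essential point for compactness is that the domains in this class are defined by functions lying in a \emph{uniformly bounded} subset of $C^{k,\alpha}(\overline{B_R})$, with $k\geq 2$ and $\alpha>0$, so that an Arzel\`a--Ascoli / Morrey-type argument applies to the defining functions rather than directly to the boundaries. Concretely, I would take any sequence $\{\Om_n\}\subset \mathcal{D}^{k,\alpha}(L,c_0,c_1)$ with defining functions $f_n\in C^{k,\alpha}(L,c_0,c_1)$. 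The uniform bound $\|f_n\|_{C^{k,\alpha}(\overline{B_R})}\leq L$ gives equicontinuity of the $f_n$ and all their derivatives up to order $k$ (with H\"older moduli controlled by $L$). By the compact embedding $C^{k,\alpha}(\overline{B_R})\hookrightarrow C^{k}(\overline{B_R})$ (or, if one wants to stay inside the class, $C^{k,\alpha}\hookrightarrow C^{k,\alpha'}$ for $\alpha'<\alpha$), I extract a subsequence, not relabeled, so that $f_n\to f$ in $C^k(\overline{B_R})$ for some limit $f$. Lower semicontinuity of the $C^{k,\alpha}$-seminorm under $C^k$ convergence then yields $\|f\|_{C^{k,\alpha}(\overline{B_R})}\leq L$, and the uniform gradient bound $|\nabla f_n|\geq c_0$ on the relevant level sets passes to the limit because the gradients converge uniformly; hence $f\in C^{k,\alpha}(L,c_0,c_1)$ and the associated $\Om:=\{x\in B_R: f(x)<0\}$ belongs to $\mathcal{D}^{k,\alpha}(L,c_0,c_1)$.

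The remaining work for compactness is to upgrade this $C^k$ convergence of defining functions to Hausdorff convergence of the complements $\overline{B_R}\setminus \Om_n$ to $\overline{B_R}\setminus\Om$. Here the uniform nondegeneracy condition $|\nabla f_n|\geq c_0$ on $f_n^{-1}(t)$ for $t\in(-c_1,c_1)$ is exactly what rules out flat or collapsing boundaries and makes the level sets $f_n^{-1}(0)$ vary continuously. I would argue that for any $\delta>0$ the uniform convergence $f_n\to f$ together with the lower bound on $|\nabla f|$ forces the $\delta$-neighborhood of $\pd\Om$ to contain $\pd\Om_n$ and vice versa for large $n$ (a standard implicit-function-theorem style estimate: where $|f|$ is at least $c_0\delta$ one cannot cross the zero level, and on $f^{-1}(0)$ the transversality $|\nabla f|\geq c_0$ lets one slide to a nearby zero of $f_n$). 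This gives $d_H(\pd\Om_n,\pd\Om)\to 0$, and then $d_H(\overline{B_R}\setminus\Om_n,\overline{B_R}\setminus\Om)\to 0$ by the convention defining $d_H$ on these domains, establishing sequential compactness, which for a metric space is compactness.

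For the continuity of $\Vol$, the cleanest route is to write $|\Om_n|=\int_{B_R}\mathbf{1}_{\{f_n<0\}}\,dx$ and show that $\mathbf{1}_{\{f_n<0\}}\to \mathbf{1}_{\{f<0\}}$ pointwise almost everywhere, then invoke dominated convergence with the dominating function $\mathbf{1}_{B_R}\in L^1(B_R)$. The pointwise convergence holds at every $x$ with $f(x)\neq 0$, since $f_n(x)\to f(x)$ uniformly; and the exceptional set $f^{-1}(0)$ has Lebesgue measure zero precisely because $|\nabla f|\geq c_0>0$ there, so that $\pd\Om=f^{-1}(0)\cap\overline{D}$ is a $C^1$ hypersurface of measure zero. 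I expect the main obstacle to be the second paragraph, i.e.\ converting convergence of defining functions into genuine Hausdorff convergence of the boundaries uniformly in the sequence; this is where the nondegeneracy hypothesis $|\nabla f|\geq c_0$ on a whole band of level sets is indispensable, and one must be careful that the limit function $f$ is itself a legitimate defining function (in particular that $f$ does not vanish to higher order anywhere on its zero set, which again follows from the preserved gradient bound). The volume continuity, by contrast, is essentially immediate once almost-everywhere convergence of the indicator functions and the measure-zero property of the boundary are in hand.
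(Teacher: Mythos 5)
Your proposal is correct in substance and coincides with the paper's proof in its first half: both arguments apply Arzel\`a--Ascoli (via the compact embedding $C^{k,\alpha}\hookrightarrow C^k$) to the defining functions $f_n$, pass the bounds $\|f\|_{C^{k,\alpha}}\leq L$ and $|\nabla f|\geq c_0$ on $f^{-1}(t)$, $t\in(-c_1,c_1)$, to the limit, and then must convert $C^k$ convergence of $f_n$ into Hausdorff convergence of the domains. Where you diverge is in that conversion step and in the volume argument. The paper invokes Thom's isotopy theorem (using $k\geq 2$) to produce, for large $n$, a $C^{k-1}$ diffeomorphism $\Phi_n$ of $\RR^3$ with $\|\Phi_n-\mathrm{id}\|_{C^{k-1}}\to 0$ and $\Om=\Phi_n(\Om_n)$; both the Hausdorff convergence and the continuity of the volume then drop out at once from the near-identity diffeomorphism. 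You instead use an elementary gradient-flow/sliding argument quantified by the transversality band ($|f_n(x)|\leq\|f_n-f\|_{C^0}$ on $f^{-1}(0)$, and one reaches $f_n^{-1}(0)$ within distance $\|f_n-f\|_{C^0}/c_0$), which yields the explicit bound $d_H(\Om_n,\Om)\leq \|f_n-f\|_{C^0}/c_0$, and you prove volume continuity by dominated convergence of indicators, using that $f^{-1}(0)$ is a $C^1$ hypersurface of measure zero. Your route is more elementary and only uses $k\geq 1$ plus the gradient bound; the paper's route is heavier but delivers strictly more.

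The one genuine (though small and reparable) gap in your version is exactly what the diffeomorphism buys the authors for free: membership of the limit in $\mathcal{D}^{k,\alpha}(L,c_0,c_1)$ requires $\Om=\{f<0\}$ to be a \emph{domain}, i.e.\ connected, and Hausdorff convergence of complements does not by itself preserve connectedness (think of a dumbbell whose neck pinches). In the paper this is immediate from $\Om=\Phi_n(\Om_n)$ with $\Om_n$ connected. In your argument you would need to add a word: since $|\nabla f|\geq c_0$ on the band of level sets, distinct components of $\{f<0\}$ have disjoint closures, and the stability of regular level sets (which is again Thom's isotopy theorem, or a hands-on version of it) shows that $\{f_n<0\}$ and $\{f<0\}$ have the same number of components for $n$ large. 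You should also record explicitly that $f^{-1}(0)\cap B_R\neq\emptyset$ (trivial: it is the limit of nonempty compact subsets of $\overline D$) and that $\{f<0\}\subset D$, both of which the paper checks. With those additions your proof is complete.
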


\begin{proof}
Obviously all the elements of $\mathcal{D}^{k,\alpha}(L,c_0,c_1)$ are bounded and nonempty, so $d_H$ defines a metric on this space. Let $\{\Om_n\}_n\subset \mathcal{D}^{k,\alpha}(L,c_0,c_1)$ be any given sequence and fix defining functions $f_n\in C^{k,\alpha}(L,c_0,c_1)$ as described in Definition~\ref{D.UniHoel}. Due to the uniform H\"{o}lder bound we may assume that the sequence $\{f_n\}$ converges to some $f\in C^{k,\alpha}(\overline{B_R})$ in the $C^k$-norm. It is easy to check that the limiting function $f$ satisfies $\|f\|_{C^{k,\alpha}(\overline{B_R})}\leq L$ and $|\nabla f|\geq c_0$ on any level set $f^{-1}(t)$ for $t\in(-c_1,c_1)$. To see that $f\in C^{k,\alpha}(L,c_0,c_1)$ it remains to check that $f^{-1}(0)\cap B_R$ is nonempty. But, since $f_{n}^{-1}(0)\cap B_R\subset \overline{D}$ is nonempty for all $n$, it is obvious that $f^{-1}(0)\cap B_R\neq \emptyset$. Now, for any $N>0$ there is a large enough $N'$ such that for all $n\geq N'$
\[
\|f-f_n\|_{C^k(B_R)}<\frac{1}{N}\,.
\]
Since $k\geq 2$, Thom's isotopy theorem~\cite[Section~3]{Advances} then implies that $f^{-1}(0)\cap B_R$ is diffeomorphic to $f_n^{-1}(0)\cap B_R$, the diffeomorphism being close to the identity. More precisely, for any $N>0$, there is $N'>0$ such that for all $n\geq N'$ there is a $C^{k-1}$ diffeomorphism $\Phi_n:\RR^3\to\RR^3$ with
\begin{equation}\label{eq.diffeo1}
\|\Phi_n-id\|_{C^{k-1}(\RR^3)}<\frac{C}{N}\,,
\end{equation}
for some $n$-independent constant $C$, and
\begin{equation}\label{eq.diffeo}
f^{-1}(0)\cap B_R=\Phi_n(f_n^{-1}(0)\cap B_R)\,.
\end{equation}
In particular, $f^{-1}(0)\cap B_R$ is contained in $\overline{D}$. Moreover, $\Phi_n$ can be taken to be different from the identity only on a neighborhood of $\overline D$. Since $\{f_n\}$ are defining functions, it is obvious that $f$ is a defining function for the domain $\Om$ bounded by the surface $f^{-1}(0)$, and clearly $\Om\subset D$. It then follows from Equation~\eqref{eq.diffeo} that
\[
\Om=\Phi_n(\Om_n)
\]
for all $n$ large enough. Accordingly,
$$d_H(\Om,\Om_n)=d_H(\Phi_n(\Om_n),\Om_n)\to 0$$
when $n\to \infty$, where we have used the estimate~\eqref{eq.diffeo1} with $N\to\infty$. Since $\Om\subset \mathcal{D}^{k,\alpha}(L,c_0,c_1)$, this completes the proof of the first part of the lemma.

To prove the continuity of the volume function, we just observe that
\[
\partial\Om = f^{-1}(0)\cap B_R=\Phi_n(f_n^{-1}(0)\cap B_R)=\Phi_n(\pd\Om_n)
\]
for a diffeomorphism $\Phi_n$ that is as close to the identity (in the $C^{k-1}$ norm) as desired, provided that $n$ is large enough. This immediately implies that $|\Om_n| \to|\Om|$ as we wanted to show.
\end{proof}

Next we claim that the function $\mu_1:\left(\mathcal{D}^{k,\alpha}(L,c_0,c_1),d_H\right)\rightarrow\mathbb{R}$ that assigns to each domain $\Om$ its first (positive) curl eigenvalue $\mu_1(\Om)$ is continuous. Since Lemmas~\ref{L.Scaling} and~\ref{L.MP} hold for domains in $\mathcal{D}^{k,\alpha}(L,c_0,c_1)$, the proof of this result is essentially the same as in Proposition~\ref{C.c}. Indeed, given any sequence $\{\Om_n\}_n\subset \mathcal{D}^{k,\alpha}(L,c_0,c_1)$ converging to some $\Om\in \mathcal{D}^{k,\alpha}(L,c_0,c_1)$ in the Hausdorff metric, we can assume, as in the proof of Lemma~\ref{L.HC}, that the defining functions $f_n\in C^{k,\alpha}(L,c_0,c_1)$ of $\Om_n$ converge in the $C^k$-norm to a defining function $f\in C^{k,\alpha}(L,c_0,c_1)$ of $\Om$ (by the uniqueness of the Hausdorff limit). Now, the only observation to take into account is that the existence of the diffeomorphism $\Phi_n$ in Equation~\eqref{eq.diffeo}, which is $C^{k-1}$-close to the identity, implies, as in the convex setting, that there are sequences $\{t_n\}_n,\{s_n\}_n$ of real numbers, both converging to $1$, such that
\[
t_n\Omega_n\subseteq \Omega\subseteq s_n\Omega_n\,.
\]
The continuity of $\mu_1:\left(\mathcal{D}^{k,\alpha}(L,c_0,c_1),d_H\right)\rightarrow\mathbb{R}$ then follows from the proof of Proposition~\ref{C.c}.

Finally, to complete the proof of Theorem~\ref{T.UniHoel}, let us consider a minimizing sequence $\{\Om_n\}_n\subset \mathcal{D}^{k,\alpha}(L,c_0,c_1)$ of fixed volume $V$ for the first curl eigenvalue $\mu_1$. By Lemma~\ref{L.HC} we can extract a subsequence converging to some $\Om\in \mathcal{D}^{k,\alpha}(L,c_0,c_1)$ with $$|\Om|=\lim_{n\rightarrow\infty}|\Om_n|=V\,.$$
Additionally, the continuity of $\mu_1$ in the Hausdorff metric argued above yields that $$\mu_1(\Om)=\lim_{n\rightarrow\infty}\mu_1(\Om_n)=\inf_{\widetilde{\Om}\in \mathcal{D}^{k,\alpha}(L,c_0,c_1)\text{, }|\widetilde{\Om}|=V}\mu_1(\widetilde{\Om})\,,$$
thus implying that $\Om$ is an optimal domain, as we wanted to show.

\section*{Acknowledgements}

The authors are grateful to Rainer Picard for providing them with a copy of Ref.~\cite{Picard}. Wadim Gerner would like to thank Kristin L\"{u}ke for a concise introduction into the convex optimization of the Dirichlet Laplacian and for pointing out Ref.~\cite{CoFi10}. This work has received funding from the European Research Council (ERC) under the European Union's Horizon 2020 research and innovation programme through the grant agreement~862342 (A.E.). It is partially supported by the grants CEX2019-000904-S, RED2018-102650-T, and PID2019-106715GB GB-C21 (D.P.-S.) funded by MCIN/AEI/10.13039/501100011033.

\appendix

\section{Definition and properties of the $\Gamma$-convergence for the curl operator}\label{app}

The notion of $\Gamma$-convergence was introduced by de Giorgi to study the existence of solutions and their regularity for variational problems. In the context of optimal domains for spectral problems, this has been particularly useful to prove the existence of $C^1$ optimal convex domains for the Dirichlet eigenvalues of the Laplacian~\cite{Bucur}. In this Appendix we introduce a suitable notion of $\Gamma$-distance for vectorial boundary problems involving the curl operator, which enjoys some nice continuity properties (cf. Proposition~\ref{T.GaEig}). Although we have not been able to use these ideas to study the regularity of the optimal convex domains obtained in Theorem~\ref{T1}, we include a summary of our results because we think they are of independent interest.

In what follows we fix a smooth bounded domain $D\subset \mathbb{R}^3$. If $\Om\subseteq D$ is a Lipschitz domain and $w$ is a vector field in $\mathcal{K}(D)$, let us consider the unique solution of the boundary value problem:
	\begin{equation}
		\label{E.bvp}
		\operatorname{curl} v=\pi_{\Om}(w)\text{ in }\Om\text{,  }v\in \mathcal{K}(\Om)\,,
	\end{equation}
where $\pi_{\Om}(w)$ denotes the $L^2$-orthogonal projection of $w|_{\Om}$ into $\mathcal{K}(\Om)$. In terms of the Biot-Savart operator
\[
\operatorname{BS}(u)(x):=\frac{1}{4\pi}\int_{\Om}\frac{u(y)\times (x-y)}{|x-y|^3}dy\,,
\]
we notice that if $u\in \mathcal{K}(\Om)$, then $\operatorname{BS}(u)\in H^1(\Om)$ satisfies $\operatorname{div}(\operatorname{BS}(u))=0$ and $\operatorname{curl}(\operatorname{BS}(u))=u$. It is then easy to check that the solution $v$ to Equation~\eqref{E.bvp} is given by
\begin{equation}\label{E.mbs}
v=	\pi_{\Om}\left(\operatorname{BS}(\pi_{\Om}(w))\right).
\end{equation}
Finally, we define the operator $\widetilde{\operatorname{BS}}_{\Om}:\cK(D)\to \cK(D)$ as
\begin{equation*}
\widetilde{\operatorname{BS}}_{\Om}(w):= \begin{cases}
	\pi_{\Om}\left(\operatorname{BS}(\pi_{\Om}(w))\right)&\text{on }\Omega\,,\\
	0&\text{on }D\backslash \Om\,.
\end{cases}
\end{equation*}
It is standard to check that $\widetilde{\operatorname{BS}}_{\Om}$ is a compact self-adjoint operator whose eigenvalues are given by $\{\mu_k^{-1}(\Om)\}_{k\in \ZZ_0}$, where $\mu_k$ are the eigenvalues of the $\curl$ operator in $\Om$.

\begin{definition}\label{D.Gamma}
Given two Lipschitz domains $\Om,\Om'\subseteq D$, we define their {\em $\Gamma$-distance} as
\[
d_{\Gamma}(\Om,\Om'):=\sup_{w\in \mathcal{K}(D)\text{, }\|w\|_{L^2(D)}=1}\|\widetilde{\operatorname{BS}}_{\Om}(w)-\widetilde{\operatorname{BS}}_{\Om'}(w)\|_{L^2(D)}\,.
\]
The fact that $d_{\Gamma}$ defines a metric on the collection of Lipschitz domains contained in $D$ is elementary.
\end{definition}

The following result shows that the eigenvalues of the curl operator are Lipchitz continuous with respect to the $\Gamma$-distance. An analogous result was crucial in~\cite{Bucur} to prove the $C^1$ regularity of the optimal convex domains for the Dirichlet eigenvalues of the Laplacian.

\begin{proposition} \label{T.GaEig}
Let us denote by $\{\la_k(\Om)\}_{k=1}^\infty$ the collection of the absolute values $\{|\mu_j(\Om)|\}_{j\in\ZZ_0}$ of the eigenvalues of the curl operator on $\Om$ (in increasing order). If $\{\Om_n\}_n\subset D$ are Lipschitz domains which converge to a Lipschitz domain $\Om\subset D$ with respect to $d_\Gamma$, then for all $k\in \mathbb{N}$ we have $\lambda_k(\Om_n)\rightarrow \lambda_k(\Om)$ as $n\rightarrow\infty$. More precisely, we have the estimate
	\[
	\left|\frac{1}{\lambda_k(\Om)}-\frac{1}{\lambda_k(\Om^\prime)}\right|\leq d_{\Gamma}(\Om,\Om^\prime)
	\]
	for any two Lipschitz domains $\Om,\Om^\prime\subseteq D$ and all $k\in \mathbb{N}$.
\end{proposition}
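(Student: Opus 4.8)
The plan is to deduce the eigenvalue convergence from the operator convergence via the classical perturbation theory of compact self-adjoint operators. The key observation is that the proposition has already essentially reduced the problem to a statement about the operators $\widetilde{\operatorname{BS}}_{\Om}$: by the discussion preceding Definition~\ref{D.Gamma}, these are compact self-adjoint operators on the fixed Hilbert space $\cK(D)$ whose nonzero eigenvalues are precisely the reciprocals $\{\mu_k^{-1}(\Om)\}_{k\in\ZZ_0}$, hence whose absolute values, arranged in decreasing order, are exactly $\{\lambda_k(\Om)^{-1}\}_{k=1}^\infty$ (together with the eigenvalue $0$ of infinite multiplicity coming from the complement $D\setminus\Om$). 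The $\Gamma$-distance $d_\Gamma(\Om,\Om')$ is by construction nothing but the operator norm $\|\widetilde{\operatorname{BS}}_{\Om}-\widetilde{\operatorname{BS}}_{\Om'}\|$ on $\cK(D)$. Thus the quantitative estimate is really an estimate comparing the ordered eigenvalues of two self-adjoint operators in terms of their operator-norm distance.

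The main tool I would invoke is the Weyl-type inequality for self-adjoint compact operators, most conveniently in its min-max form. For a compact self-adjoint operator $T$ on a Hilbert space, arrange its eigenvalues by decreasing absolute value as $s_1(T)\geq s_2(T)\geq\cdots\geq 0$ (these are the singular values, equal to absolute values of eigenvalues in the self-adjoint case). The min-max principle gives $s_k(T)=\min_{\dim V=k-1}\max_{\|x\|=1,\,x\perp V}|\langle Tx,x\rangle|$ or, more robustly, the bound $|s_k(T)-s_k(S)|\leq \|T-S\|$ for any two such operators. First I would set $T=\widetilde{\operatorname{BS}}_{\Om}$ and $S=\widetilde{\operatorname{BS}}_{\Om'}$, identify $s_k(\widetilde{\operatorname{BS}}_{\Om})=1/\lambda_k(\Om)$ using the spectral identification above, and then apply this Weyl inequality together with the identity $\|T-S\|=d_\Gamma(\Om,\Om')$ to obtain immediately
\[
\left|\frac{1}{\lambda_k(\Om)}-\frac{1}{\lambda_k(\Om')}\right|\leq d_\Gamma(\Om,\Om')\,.
\]
The convergence statement $\lambda_k(\Om_n)\to\lambda_k(\Om)$ then follows at once: if $d_\Gamma(\Om_n,\Om)\to 0$, the displayed estimate forces $1/\lambda_k(\Om_n)\to 1/\lambda_k(\Om)$, and since all $\lambda_k$ are bounded below away from zero by the Faber--Krahn bound of Theorem~\ref{T.FK} (so the reciprocals stay bounded and bounded away from a degenerate limit), this is equivalent to $\lambda_k(\Om_n)\to\lambda_k(\Om)$.

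The only genuinely delicate point, and the one I would treat most carefully, is the bookkeeping of the eigenvalue labelling. The operator $\widetilde{\operatorname{BS}}_{\Om}$ acts on all of $\cK(D)$, but it is identically zero on the orthogonal complement of $\cK(\Om)$ (that is, on fields supported in $D\setminus\Om$), so it always carries the eigenvalue $0$ with infinite multiplicity. I must check that the ordering of the \emph{nonzero} singular values $s_k$ coincides with the ordering of $1/\lambda_k(\Om)$ for every finite $k$, and that this persists under the limit; the presence of the infinite-dimensional kernel does not interfere because $s_k$ for finite $k$ only ever sees the nonzero part of the spectrum. A subtlety worth a sentence is that both $\mu_1(\Om)$ and $\mu_{-1}(\Om)$ contribute to the list $\{\lambda_k\}$, so the reindexing from the two-sided family $\{\mu_j\}_{j\in\ZZ_0}$ to the one-sided family $\{\lambda_k\}_{k\geq 1}$ by increasing absolute value must be stated explicitly. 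Once this labelling is pinned down, the proof is a direct application of the perturbation inequality and involves no further estimation.
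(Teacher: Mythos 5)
Your proposal is correct and follows essentially the same route as the paper: the authors likewise identify $d_\Gamma(\Om,\Om')$ with the operator norm $\|\widetilde{\operatorname{BS}}_{\Om}-\widetilde{\operatorname{BS}}_{\Om'}\|$ and invoke the Weyl-type perturbation inequality for the ordered absolute values of eigenvalues of compact self-adjoint operators (citing \cite[Theorem 2.3.1]{Hen06}), together with the identification $\nu_k(\Om)=1/\lambda_k(\Om)$. Your extra care with the labelling and the infinite-dimensional kernel of $\widetilde{\operatorname{BS}}_{\Om}$ on fields supported in $D\setminus\Om$ is a reasonable addition but does not change the argument.
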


\begin{proof}
It is immediate from Definition~\ref{D.Gamma} that $$d_{\Gamma}(\Om,\Om')=\|\widetilde{\operatorname{BS}}_{\Om}-\widetilde{\operatorname{BS}}_{\Om^\prime}\|_{L^2(D)}\,,$$
where $\|\cdot\|_{L^2(D)}$ denotes the operator norm. Since $\widetilde{\operatorname{BS}}_{\Om}$ and $\widetilde{\operatorname{BS}}_{\Om'}$ are compact self-adjoint operators on a Hilbert space, denoting by $\nu_k(\Om),\nu_k(\Om')$, $k\in \mathbb{N}$, the absolute values of the eigenvalues of $\widetilde{\operatorname{BS}}_{\Om}$ and $\widetilde{\operatorname{BS}}_{\Om'}$, respectively, ordered by size starting with the largest and counting multiplicities, then~\cite[Theorem 2.3.1]{Hen06}
\[
\|\widetilde{\operatorname{BS}}_{\Om}-\widetilde{\operatorname{BS}}_{\Om^\prime}\|_{L^2(D)}\geq \left|\nu_k(\Om)-\nu_k(\Om')\right|
\]
for all $k\in\NN$. Since $\nu_k(\Om)=\frac{1}{\la_k(\Om)}$ as mentioned above, the proposition follows.

\end{proof}

\bibliographystyle{amsplain}

\begin{thebibliography}{99}\frenchspacing

\bibitem{ABDG98}
C. Amrouche, C. Bernardi, M. Dauge, V. Girault, Vector potentials in three-dimensional non-smooth domains, Math. Meth. Appl. Sci. 21 (1998) 823--864.

\bibitem{Amrou}
C. Amrouche, N.E.H. Seloula, $L^p$ theory for vector potentials and Sobolev's inequalities for vector fields, Math. Mod. Meth. Appl. Sci. 23 (2013) 37--92.

\bibitem{Wolt}
M. Avellaneda, P. Laurence, On Woltjer's variational principle for force-free fields, J. Math. Phys. 32 (1991) 1240--1253.


\bibitem{Bucur}
D. Bucur, Regularity of optimal convex shapes, J. Convex Anal. 10 (2003) 501--516.

\bibitem{Buffa}
A. Buffa, M. Costabel, D. Sheen, On traces for $\mathbf{H}(\mathbf{curl},\Om)$ in
Lipschitz domains, J. Math. Anal. Appl. 276 (2002) 845--867.

\bibitem{BDM}
G. Buttazzo and G. Dal Maso, An existence result for a class of shape
optimization problems, Arch. Rat. Mech. Anal. {\bf 122} (1993), 183--195.

\bibitem{Cantarella}
J. Cantarella, D. DeTurck, H. Gluck, M. Teytel, Isoperimetric problems for the helicity of vector fields and the Biot--Savart and curl operators, J. Math. Phys. 41 (2000) 5615--5641.

\bibitem{Canta}
J. Cantarella, D. DeTurck, H. Gluck, M. Teytel, The spectrum of the curl operator on spherically symmetric domains, Phys. Plasmas 7 (2000) 2766--2775.

\bibitem{CoFi10}
A. Colesanti, M. Fimiani, The Minkowski problem for torsional rigidity, Indiana Univ. Math. J. 59 (2010) 1013--1040.

\bibitem{DL04}
S. Dekel, D. Leviatan, Whitney estimates for convex domains with applications to multivariate piecewise polynomial approximation, Found. Comput. Math. 4 (2004) 345--368.

\bibitem{JMPA}
A. Enciso, M.A. Garc\'\i a-Ferrero, D. Peralta-Salas, The Biot--Savart
operator of a bounded domain, J. Math. Pures Appl. 119 (2018) 85--113.

\bibitem{ELP21}
A. Enciso, A. Luque, D. Peralta-Salas, MHD equilibria with nonconstant pressure in nondegenerate toroidal domains, preprint (2021).

\bibitem{Advances}
A. Enciso, D. Peralta-Salas, Submanifolds that are level sets of solutions to a second-order elliptic PDE, Adv. Math. 249 (2013) 204--249.

\bibitem{EP21}
A. Enciso, D. Peralta-Salas, Non-existence of axisymmetric optimal domains with smooth boundary for the first curl eigenvalue, Ann. Sc. Norm. Sup. Pisa, to appear (2022).

\bibitem{Filo}
N. Filonov, The operator rot in domains of finite measure, J. Math. Sci. 110 (2002) 3029--3030.


\bibitem{Gerner}
W. Gerner, Existence and characterisation of magnetic energy
minimisers on oriented, compact Riemannian 3-manifolds
with boundary in arbitrary helicity classes, Ann. Global Anal. Geom. 58 (2020) 267--285.

\bibitem{GernerThesis}
W. Gerner, Minimisation Problems in Ideal Magnetohydrodynamics, PhD dissertation, RWTH Aachen University, 2020.



\bibitem{Hen06}
A. Henrot, Extremum Problems for Eigenvalues of Elliptic Operators, Birkh\"{a}user, Basel, 2006.

\bibitem{Henrot}
A. Henrot, Shape Optimization and Spectral Theory, De Gruyter, Warsaw/Berlin, 2017.

\bibitem{HenOud} A. Henrot, E. Oudet, Minimizing the Second Eigenvalue of the Laplace Operator with Dirichlet Boundary Conditions, Arch. Rat. Mech. Anal. 169 (2003) 73--87.

\bibitem{Hiptmair}
R. Hiptmair, P.R. Kotiuga, S. Tordeux, Self-adjoint curl operators,
Ann. Mat. Pura Appl. 191 (2012) 431--457.

\bibitem{Krantz}
S.G. Krantz, Convex Analysis, CRC Press, Boca Raton, 2015.




\bibitem{MMT}
D. Mitrea, M. Mitrea, M. Taylor, Layer potentials, the Hodge
Laplacian, and global boundary problems in nonsmooth Riemannian
manifolds, Mem. Amer. Math. Soc. 150 (2001) 120 pp.

\bibitem{MP14}
P.B. Mucha, M. Pokorny, The rot-div system in exterior domains, J. Math. Fluid Mech. 16 (2014) 701--720.

\bibitem{OHara} J. O'Hara, Minimal unfolded regions of a convex hull and parallel bodies, Hokkaido Math. J. 44 (2015) 175--183

\bibitem{Picard}
R. Picard, On a selfadjoint realization of curl and some of its applications, Ricerche Mat. 47 (1998) 153--180.

\bibitem{Verchota}
G. Verchota, Layer potentials and boundary value problems
for Laplace's equation in Lipschitz domains, J. Funct. Anal. 59 (1984) 572--611.

\bibitem{Vial}
J.P. Vial, Strong convexity of sets and functions, J. Math. Econ. 9 (1982) 187--205.


\bibitem{Wills}
M.D. Wills, Hausdorff distance and convex sets, J. Convex Anal. 14 (2007) 109--117.

\bibitem{Giga}
Z. Yoshida, Y. Giga, Remarks on spectra of operator rot, Math. Z. 204 (1990) 235--245.
\end{thebibliography}

\end{document}